\newtheorem{thm}{Theorem}[section]
\newtheorem{lem}[thm]{Lemma}
\newtheorem{cor}[thm]{Corollary}
\theoremstyle{definition}
\theoremstyle{remark}
\newtheorem{rem}{Remark}
\def\R2n{{\mathbb R}^{2n}}
\def\R^2{{\mathbb R}^2}
\def\R2n{{\mathbb R}^{2n}}
\def\R{{\mathbb R}}
\title[ Boundedness of $(k, a)$-Fourier multipliers]{$L^p$-$L^q$ boundedness of $(k, a)$-Fourier multipliers with applications to Nonlinear equations}
\author{Vishvesh Kumar}
\address{Vishvesh Kumar \endgraf
	Department of Mathematics: Analysis, Logic and Discrete Mathematics
	\endgraf
	Ghent University, Belgium}
\email{vishveshmishra@gmail.com}
\author[Michael Ruzhansky]{Michael Ruzhansky}
\address{
	Michael Ruzhansky:
	\endgraf
	Department of Mathematics: Analysis, Logic and Discrete Mathematics
	\endgraf
	Ghent University, Belgium
	\endgraf
	and
	\endgraf
	School of Mathematical Sciences
	\endgraf
	Queen Mary University of London
	\endgraf
	United Kingdom
	\endgraf
	{\it E-mail address} {\rm michael.ruzhansky@ugent.be}
}
\begin{document}
	\begin{abstract} The $(k,a)$-generalised Fourier transform  is the unitary operator defined using the $a$-deformed Dunkl harmonic oscillator.
	The main aim of this paper is to prove $L^p$-$L^q$ boundedness of $(k, a)$-generalised Fourier multipliers. To show the boundedness we first establish Paley inequality and Hausdorff-Young-Paley inequality for $(k, a)$-generalised Fourier transform. We also demonstrate applications of obtained results to study the well-posedness of nonlinear partial differential equations. 
	\end{abstract}
	\keywords{$(k,a)$-generalised Fourier transform; $a$-deformed Dunkl oscillator; $(k,a)$-Fourier multipliers; Paley inequality; Hausdorff-Young-Paley inequality; Nonlinear PDEs}
	\subjclass[2010]{Primary 42B10; 42B37  Secondary 42B15; 33C45} 
	\maketitle
	
	\section{Introduction and Basics on $(k,a)$-generalised Fourier transform}
	
In his seminal paper \cite{Hormander1960}, H\"ormander initiated the study of boundedness of the translation invariant operators on $\mathbb{R}^N$. The translation invariant operators on $\mathbb{R}^N$ can be characterised using the classical Euclidean Fourier transform on $\mathbb{R}^N$ and therefore they are also known as Fourier multipliers. The boundedness of Fourier multipliers is useful to solve problems in the area of mathematical analysis, in particular, in PDEs. H\"ormander \cite{Hormander1960} established the $L^p$- boundedness and $L^p$-$L^q$ boundedness of Fourier multipliers on $\mathbb{R}^N$. After that, $L^p$-boundedness of Fourier multipliers has been investigated by several researchers in many different setting, we cite here \cite{ Hormander1960, Anker, Stein, Ruzwirth, BCC, BX99, DPW, Soltani, Wro, GStempak, DHejna} to mention a few of them. In particular, $L^p$-boundedness of multipliers was established in \cite{Soltani} for the one dimensional Dunkl transform and very recently in \cite{DHejna} in the multidimensional setting. Recently, the researchers have turned their attention to establish the boundedness of $L^p$-$L^q$ multipliers for the range $1<p \leq 2\leq   q< \infty,$ see \cite{ARN1, AR, Amri, CKNR, CK, Cow1, KR}. Precisely,  the second author and his collaborators started investigating the H\"ormander $L^p$-$L^q$ Fourier multipliers theorem and its different consequences for locally compact groups and on homogeneous manifolds. Such analysis  includes the Hardy-Littlewood inequality, spectral multipliers theorems and applications to PDEs \cite{ARN1, AR, ARN, KR}. In \cite{CK}, similar results have been proved for the eigenfunction expansions of anharmonic oscillators and extended to the more general setting of bi-orthogonal expansions in \cite{CKNR}. Ben Sa\"id et al. \cite{BKO, BKO2} introduced $(k,a)$-generalised Fourier transform. It generalises many important integral transforms including Fourier transform and Dunkl transform on the Euclidean spaces $\mathbb{R}^N$ \cite{Ben, BKO2}. Recently, there is a growing interest to develop the analysis related to the $(k,a)$-generalised Fourier transform. Notably, the uncertainty principles and Pitt inequalities \cite{GIT, John}, maximal function and translation operator \cite{Luc}, wavelets multipliers \cite{Hatem} and Hardy inequality \cite{Teng} were explored by many researchers. 
In this paper, we establish $L^p$-$L^q$ boundedness of $(k, a)$-Fourier multipliers  using the $(k,a)$-generalised Fourier transform. The proof of the main result hinges upon the Paley inequality and Hausdorff-Young-Palay inequality for $(k,a)$-generalised Fourier transform obtained by using the Hausdorff-Young inequality established in \cite{John, GIT}.  

To describe our main result let us recall the classical H\"ormander Fourier multipliers theorem settled in \cite{Hormander1960}: For $1<p\leq 2 \leq q <\infty,$  the Fourier multiplier $T_m: \mathcal{S}(\mathbb{R}^N) \rightarrow \mathcal{S}'(\mathbb{R}^N)$ associated with symbol $m:\mathbb{R}^N \rightarrow \mathbb{C}$  defined by $\mathcal{F}(T_m f)(\xi)= m(\xi) \mathcal{F}(f)(\xi)$ for $\xi \in \mathbb{R}^N,$ has a bounded extension from $L^p(\mathbb{R}^N)$ to $L^q(\mathbb{R}^N)$ provided that the symbol $m$ satisfies the condition
\begin{equation}
    |\{\xi \in \mathbb{R}^N: |m(\xi)| \geq s\}| \leq \frac{1}{s^b}\,\,\,\textnormal{for all}\,\, s>0,
\end{equation} where $\frac{1}{b}=\frac{1}{p}-\frac{1}{q},$ and $\mathcal{F}$ denotes the Euclidean Fourier transform of $f$ defined as 
$$\mathcal{F}(f)(\xi):=(2\pi)^{-\frac{N}{2}}\int_{\mathbb{R}^N} f(x)\, e^{-i \langle x, \xi \rangle} \, dx,\quad \xi \in \mathbb{R}^N.$$ Here  $\langle x, \xi \rangle$ denotes the standard Euclidean inner product of two vectors $x$ and $\xi$ in $\mathbb{R}^N$ and $\|x\|$ will denote the Euclidean norm on $\mathbb{R}^N.$ 
The Euclidean Fourier transform $\mathcal{F}$ on $\mathbb{R}^N$ can be described using the spectral information of the harmonic oscillator $\Delta_{\mathbb{R}^N}-\|x\|^2,$ where $\Delta_{\mathbb{R}^N}$ is the Laplacian on $\mathbb{R}^N.$ In fact, Howe \cite{Howe} found the following description of the Euclidean Fourier transform $\mathcal{F}:$
\begin{equation}
    \mathcal{F}:= \exp \Big( \frac{i\pi N}{4} \Big) \exp \Big( \frac{i \pi}{4} (\Delta_{\mathbb{R}^N}-\|x\|^2) \Big).
\end{equation}
This description has been proved to be useful to define generalisations of the Fourier transform such as Clifford algebra-valued Fourier transform and fractional Fourier transform. These constructions have been explained in an excellent overview article \cite{DeBie}. On the other hand, Dunkl  \cite{Dunkl, Dunkl2} presented a generalisation of the Euclidean Fourier transform and Euclidean Laplacian on $\mathbb{R}^N$, which is now known as Dunkl transform (see \cite{Dejeu}) and Dunkl Laplacian, and are usually denoted by $\mathcal{F}_{k}$ and $\Delta_{k},$ respectively,  using the root system $\mathcal{R} \subset \mathbb{R}^N,$ a reflection group $\mathfrak{G} \subset O(N, \mathbb{R})$  generated by the root reflections $r_\alpha,$ $\alpha \in \mathcal{R},$ and a multiplicity function $k: \mathcal{R} \rightarrow \mathbb{R}_+$ such that $k$ is $\mathfrak{G}$-invariant. We set $k(\alpha)=k_\alpha,$ $\langle k \rangle=\frac{1}{2}\sum_{\alpha \in \mathcal{R}} k_\alpha,$ $v_k(x)=\prod_{\alpha \in \mathcal{R}} |\langle \alpha, x \rangle|^{ k_\alpha},$ $v_{k, a}(x):= \|x\|^{2-a} v_k(x).$ Define $L^p_{k,a}(\mathbb{R}^N):=L^p(\mathbb{R}^N, v_{k,a}dx)$ and $d\mu_{k,a}(x)=v_{k,a} dx.$ 

To describe the Dunkl Laplacian, let us define the first order Dunkl operator for $\xi \in \mathbb{R}^N$ and for a fixed multiplicity function $k$  by 
$$T_\xi(k)f(x)= \partial_\xi f(x)+\sum_{\alpha \in \mathcal{R}_+} k_\alpha \, \langle \alpha, \xi \rangle \frac{f(x)-f(r_{\alpha}x)}{\langle \alpha, x \rangle},\quad f \in C^1(\mathbb{R}^N),$$ where $\partial_\xi$ is the direction derivation in the direction of $\xi$ and $\mathcal{R}_+$ denotes the positive root subsystem.
Let us fix an orthonormal basis $\{\xi_1, \xi_2, \ldots, \xi_N\}$ for the inner product space $(\mathbb{R}^N, \langle\cdot, \cdot \rangle)$ and write $T_{\xi_j}(k)$ as $T_j(k)$ for $j \in \{1, 2, \ldots, N\}.$ Then the Dunkl Laplacian is defined by $\Delta_k=\sum_{j=1}^N T_j(k)^2.$ The Dunkl Laplacian has explicit form and also plays a very important role in the Dunkl analysis (see \cite{Anker2, Rosler} for more details and related analysis).
When the multiplicity function is trivial (i.e., $k \equiv 0$) then $F_{k}$ and $\Delta_k$ turn out be to just the Euclidean Fourier transform $\mathcal{F}$ and the Euclidean Laplacian $\Delta_{\mathbb{R}^N},$ respectively. Using the Dunkl Laplacian one can define the Dunkl harmonic oscillator (or Dunkl-Hermite operator) as $\Delta_k-\|x\|^2$.  
Ben Sa\"id et al. \cite{BKO} considered the $a$-deformed Dunkl harmonic oscillator given by 
$$\Delta_{k, a}:= \|x\|^{2-a} \Delta_k-\|x\|^a,\quad a>0.$$ By making use of this $a$-deformed Dunkl harmonic oscillator $\Delta_{k,a}$, they introduced a two parameters unitary operator, {\it $(k, a)$-generalised Fourier transform}, $\mathcal{F}_{k,a}$ on $L^2_{k,a}(\mathbb{R}^N),$ by
\begin{equation} \label{kadunkl}
    \mathcal{F}_{k,a}:= \exp\left [\frac{i \pi}{2}\left(\frac{1}{a} (2 \langle k \rangle+n+a-2) \right)\right] \exp \left[\frac{i \pi}{2a} (\Delta_{k, a}) \right].
\end{equation}

The $(k,a)$-generalised Fourier transform $\mathcal{F}_{k,a}$ includes some prominent transforms on the Euclidean space $\mathbb{R}^N:$
\begin{itemize}
    \item For $a=2$ and $k>0$, $\mathcal{F}_{k,a}$  is the Dunkl transform \cite{Dejeu}.
    \item For $a=2$ and $k\equiv 0,$ $\mathcal{F}_{k,a}$ is the Euclidean Fourier transform \cite{Howe}.
    \item For $a=1$ and $k \equiv 0,$ $\mathcal{F}_{k,a}$ is the Hankel transform appearing as the unitary inversion operator of the Schr\"odinger model of the minimal representation of the group $O(N+1, 2)$ (see \cite{KobMano, KobMano2, KobMano3}).
\end{itemize}

For $a>0$ and $a+2\langle k \rangle+N-2>0,$ the $(k,a)$-generalised Fourier transform $\mathcal{F}_{k,a}$ is a bijective linear operator such that 
\begin{equation}
    \|\mathcal{F}_{k,a}(f)\|_{L_{k,a}^2(\mathbb{R}^N)}=\|f\|_{L_{k,a}^2(\mathbb{R}^N)}.
\end{equation}
By the Schwartz kernel theorem there exists a distribution kernel $B_{k, a}(\xi, x)$ such that 
$$\mathcal{F}_{k,a}f(\xi)=c_{k,a} \int_{\mathbb{R}^N} B_{k,a}(\xi,x) \,f(x)\, d\mu_{k,a}(x)$$
with a symmetric kernel $B_{k,a}(\xi, x)$ (\cite{BKO}).

The next lemma, which is a corrected version of \cite[Lemma 2.8]{John} in view of \cite[ Section 6]{GIT},  presents  some conditions on $N, k$, and $a$ such that kernel $B_{k,a}(\xi, x)$ is uniformly bounded  (see also \cite[Theorem 5.11]{BKO} and \cite[Theorem 9]{DDL}).
\begin{lem}  \label{standing}Assume $N \geq 1,\, k \geq 0,\, a+2 \langle k \rangle+N-2 >0,$ and that exactly one of the following additional assumption holds: 
   \begin{itemize}
       \item[(i)] $N=1$  and  $a>0$;
       \item[(ii)] $a=1$ and $2 \langle k \rangle+N-2 \geq 0$
       \item[(iii)] $a=2;$
       \item[(iii)] $k=0$ and  $a=\frac{2}{m}$  for some\,\,$m \in \mathbb{N}.$
   \end{itemize}
   Then $B_{k, a}$ is uniformly bounded, that is, $|B_{k, a}(\xi, x)| \leq M$ for all $x, \xi \in \mathbb{R}^N,$ where $M$ is a finite constant that depends only on $N,\,k,\,$ and $a.$ 
	\end{lem}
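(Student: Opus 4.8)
The plan is to reduce, in each of the four listed regimes, the kernel $B_{k,a}$ to an explicit special-function expression whose uniform boundedness is classical, and to read off the constant $M$ from it. It is convenient to note first that $|B_{k,a}(\xi,x)|\le M$ for all $\xi,x\in\mathbb{R}^N$ is equivalent to the boundedness of $\mathcal{F}_{k,a}\colon L^1_{k,a}(\mathbb{R}^N)\to L^\infty(\mathbb{R}^N)$, i.e.\ to the $L^1$-$L^\infty$ endpoint of the Hausdorff-Young inequality for the $(k,a)$-transform; so the lemma records exactly the hypotheses under which that endpoint is valid, as established in \cite{GIT} (correcting the stronger claim of \cite{John}) and consistent with \cite[Theorem 5.11]{BKO} and \cite[Theorem 9]{DDL}. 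The proof rests on the spectral expansion of the kernel from \cite[Section 5]{BKO}: in polar coordinates $x=r\omega$, $\xi=s\eta$ with $r,s\ge0$ and $\omega,\eta\in S^{N-1}$,
\[
B_{k,a}(\xi,x)=\sum_{\ell\ge0}\widetilde c_\ell\,(rs)^{\ell}\,\widetilde J_{\nu_\ell}\!\bigl(\tfrac{2}{a}(rs)^{a/2}\bigr)\,Z^k_\ell(\eta,\omega),
\]
where $\widetilde J_\nu(u)=(u/2)^{-\nu}J_\nu(u)$, $\nu_\ell=\frac{2\ell+2\langle k\rangle+N-2}{a}$, $Z^k_\ell$ is the reproducing kernel of the degree-$\ell$ $k$-spherical harmonics, and the $\widetilde c_\ell$ are explicit constants; uniform boundedness of $B_{k,a}$ is then the question of summability of this series, and the four hypotheses isolate the cases in which it can be controlled.

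If $a=2$ (case (iii)), the transform is the Dunkl transform, so $B_{k,2}(\xi,x)$ is a fixed multiple of the Dunkl kernel $E_k(\xi,-ix)$; by R\"osler's positivity theorem $E_k(\xi,-ix)=\int_{\mathbb{R}^N}e^{-i\langle y,x\rangle}\,d\gamma_\xi(y)$ with $\gamma_\xi$ a probability measure supported in the convex hull of the orbit $\mathfrak{G}\xi$, whence $|E_k(\xi,-ix)|\le1$ and the claim holds with $M$ equal to that multiple. If $N=1$ (case (i)), then $S^0$ has only two points, only the terms $\ell=0,1$ survive, and $B_{k,a}$ on $\mathbb{R}$ is an explicit two-term combination of $\widetilde J_{\nu_0}$ and $\widetilde J_{\nu_1}$ (the latter carrying a monomial prefactor) at argument $\tfrac{2}{a}|xy|^{a/2}$; one then verifies boundedness on $\mathbb{R}$ using $\widetilde J_\nu(u)=O(1)$ near $u=0$ and $\widetilde J_\nu(u)=O(u^{-\nu-1/2})$ as $u\to\infty$ (the Poisson representation and large-argument asymptotics of $J_\nu$), the admissible range of $(a,\langle k\rangle)$ being governed by the requirement $\nu_0,\nu_1\ge-\tfrac{1}{2}$.

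If $a=1$ (case (ii)), the $(k,1)$-transform is the Hankel-type transform of \cite{BKO}, and $B_{k,1}(\xi,x)$ has the closed form of \cite[Theorem 5.11]{BKO} --- a one-dimensional Bessel-type factor composed with the Dunkl kernel on $S^{N-1}$ --- whose Bessel index is $\ge-\tfrac{1}{2}$ precisely when $2\langle k\rangle+N-2\ge0$, which is the stated hypothesis; the spherical factor is bounded separately. If $k=0$ and $a=\tfrac{2}{m}$ (case (iv)), then $\nu_\ell=\tfrac{m}{2}(2\ell+N-2)$, and the Bessel multiplication (Gegenbauer addition) formula permits the resulting Bessel-Gegenbauer series to be resummed into a finite integral of total mass bounded by a constant $M(N,m)$; this is the content of \cite[Theorem 5.11]{BKO} and \cite[Theorem 9]{DDL}.

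I expect case (iv) to be the crux. For a generic $a>0$ the kernel $B_{k,a}$ is genuinely \emph{unbounded} --- this is the point of \cite[Section 6]{GIT}, which corrects \cite[Lemma 2.8]{John} --- so one cannot estimate the spectral series term by term, since the individual Bessel bounds, each harmless, do not sum. One is forced to use the arithmetic restriction $a=2/m$ in an essential way, terminating the relevant hypergeometric coefficients and invoking the Bessel multiplication formula so as to collapse the series to a manifestly bounded expression; pinning down the dependence of $M$ on $N$ and $m$, rather than merely its finiteness, likewise requires this closed form rather than a soft argument. The remaining three cases, by contrast, are routine once the corresponding explicit formulas of \cite{BKO} are in hand.
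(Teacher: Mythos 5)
A preliminary point of comparison: the paper does not actually prove Lemma \ref{standing} --- it is quoted as a known fact assembled from \cite{BKO}, \cite{John}, \cite{GIT} and \cite{DDL}, so there is no internal argument for your sketch to match; you are reconstructing the proofs from the cited sources. That reconstruction is broadly faithful: the equivalence with the $L^1_{k,a}\to L^\infty$ endpoint, the spherical-harmonic/Bessel expansion of $B_{k,a}$ from \cite[Section 5]{BKO}, the bound $|E_k(\xi,-ix)|\le 1$ via the positive integral representation of the Dunkl kernel for $a=2$, the closed form of \cite[Theorem 5.11]{BKO} for $a=1$ (where the relevant Bessel index $2\langle k\rangle+N-2$ is nonnegative by hypothesis (ii)), and the resummation for $k=0$, $a=2/m$ via \cite{DDL} are the right ingredients, and your observation that term-by-term estimation of the series must fail for generic $a$ is exactly the content of \cite[Section 6]{GIT}.

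The gap is in case (i), and it is not cosmetic. Your stated criterion for boundedness of the one-dimensional kernel is $\nu_0,\nu_1\ge-\tfrac12$ with $\nu_0=\tfrac{2k-1}{a}$ and $\nu_1=\tfrac{2k+1}{a}$, but hypothesis (i) together with the standing assumption $a+2k-1>0$ yields only $\nu_0>-1$. In the range $-1<\nu_0<-\tfrac12$ (for instance $k=0$ and $1<a<2$) the asymptotics you invoke give $\widetilde J_{\nu_0}(u)=O(u^{-\nu_0-1/2})$, which grows; and since $|xy|=(au/2)^{2/a}$, the odd term $xy\,\widetilde J_{\nu_1}(u)$ grows at exactly the same rate $u^{-\nu_0-1/2}$, the two pieces oscillating at the same frequency with phases offset by $(\nu_1-\nu_0)\pi/2=\pi/a$, so no cancellation is available for generic $a$. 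Hence the argument you outline establishes case (i) only under the stronger condition $\nu_0\ge-\tfrac12$, i.e.\ $a+4k\ge2$ --- which is what the conjecture of \cite{GIT} quoted after Theorem \ref{HY} predicts for $N=1$ --- and not under the hypothesis as stated. You must either exhibit the cancellation for $-1<\nu_0<-\tfrac12$ (which your own phase computation essentially rules out) or record that the hypothesis of case (i) needs to be strengthened; until that is resolved, case (i) is not proved, and your analysis in fact casts doubt on its validity in the stated generality.
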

	
	The following result is the Hausdorff -Young inequality for $(k, a)$- generalised Fourier transform.
	\begin{thm} \cite[Proposition 2.9]{John} \label{HY} Assume that $N,\, k,\,$ and $a$ satisfy the assumption of Lemma \ref{standing}. For $1 \leq p \leq 2,$ fix $p'=\frac{p}{p-1}.$ Then for $f \in L^p_{k,a}(\mathbb{R}^N)$ we have 
	\begin{equation} \label{HYineq}
	     \|\mathcal{F}_{k, a} f\|_{L^{p'}_{k, a}(\mathbb{R}^N)} \leq C \|f\|_{L^p_{k, a}(\mathbb{R}^N)},
	\end{equation} 
	where $C=M^{2/p-1}.$
	\end{thm}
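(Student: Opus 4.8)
The plan is to obtain \eqref{HYineq} by Riesz--Thorin interpolation between the elementary $L^1$--$L^\infty$ bound coming from the uniform boundedness of the kernel and the $L^2$--$L^2$ identity coming from Plancherel's theorem for $\mathcal{F}_{k,a}$.

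First I would record the two endpoint estimates. At $p=2$ the operator $\mathcal{F}_{k,a}$ is unitary on $L^2_{k,a}(\mathbb{R}^N)$, so its operator norm from $L^2_{k,a}(\mathbb{R}^N)$ to itself equals $1=M^{0}$. At $p=1$, for $f$ in a dense subspace of $L^1_{k,a}(\mathbb{R}^N)$ on which the integral representation $\mathcal{F}_{k,a}f(\xi)=c_{k,a}\int_{\mathbb{R}^N}B_{k,a}(\xi,x)f(x)\,d\mu_{k,a}(x)$ is valid (for instance $\mathcal{S}(\mathbb{R}^N)$), I would invoke the uniform bound on the kernel of $\mathcal{F}_{k,a}$ supplied by Lemma \ref{standing} to get
\[
\|\mathcal{F}_{k,a}f\|_{L^\infty(\mathbb{R}^N)}\le M\,\|f\|_{L^1_{k,a}(\mathbb{R}^N)}.
\]
Since $\mathcal{S}(\mathbb{R}^N)$ is dense in $L^1_{k,a}(\mathbb{R}^N)$, this extends to a bounded operator $L^1_{k,a}(\mathbb{R}^N)\to L^\infty(\mathbb{R}^N)$ of norm at most $M=M^{1}$, which coincides on $L^1_{k,a}(\mathbb{R}^N)\cap L^2_{k,a}(\mathbb{R}^N)$ with the operator $\mathcal{F}_{k,a}$ already defined on $L^2_{k,a}(\mathbb{R}^N)$.

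Next I would apply the Riesz--Thorin interpolation theorem on the $\sigma$-finite measure space $(\mathbb{R}^N,d\mu_{k,a})$ with endpoints $(p_0,q_0)=(2,2)$ and $(p_1,q_1)=(1,\infty)$ and interpolation parameter $\theta\in[0,1]$. The relations $\frac1p=\frac{1-\theta}{2}+\theta$ and $\frac{1}{p'}=\frac{1-\theta}{2}$ are both solved by $\theta=\frac2p-1$---consistently, since $1\le p\le 2$ and $p'$ is the conjugate exponent of $p$---and the resulting norm bound is $1^{\,1-\theta}\,M^{\theta}=M^{2/p-1}$, which is exactly the constant $C$ claimed.

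The proof is essentially routine; the point requiring care---and the chief obstacle---is the $L^1$--$L^\infty$ endpoint, namely that the hypotheses listed in Lemma \ref{standing} are exactly what forces the \emph{uniform} bound on the kernel (this is where the correction of \cite[Lemma 2.8]{John} in light of \cite[Section 6]{GIT} enters) and that the operative constant is precisely $M$, since it is this value that propagates through the interpolation into the stated constant $M^{2/p-1}$. The remaining ingredients---density of $\mathcal{S}(\mathbb{R}^N)$ in $L^p_{k,a}(\mathbb{R}^N)$ for $1\le p<\infty$, the validity of Riesz--Thorin over the weighted measure $d\mu_{k,a}=v_{k,a}\,dx$, and the compatibility of the $L^1$ and $L^2$ realizations of $\mathcal{F}_{k,a}$---are standard.
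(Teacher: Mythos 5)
Your proof is correct and the approach is exactly the expected one: the paper gives no proof of this statement, importing it verbatim from \cite[Proposition 2.9]{John}, where it is obtained by precisely this Riesz--Thorin interpolation between the Plancherel identity on $L^2_{k,a}(\mathbb{R}^N)$ and the $L^1$--$L^\infty$ bound furnished by the uniform kernel estimate of Lemma \ref{standing}. The only detail worth flagging is the normalising constant $c_{k,a}$ in the integral representation $\mathcal{F}_{k,a}f(\xi)=c_{k,a}\int_{\mathbb{R}^N}B_{k,a}(\xi,x)f(x)\,d\mu_{k,a}(x)$, which must either be absorbed into $M$ or carried through the interpolation if the constant is to come out exactly as $M^{2/p-1}$.
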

	It was conjectured by Gorbachev et al. \cite{GIT} that if $a+2\langle k \rangle+N-3\geq 0$ then the kernel satisfies $|B_{k,a}(\xi, x)| \leq B_{k,a}(0, x)=1 $ for all $x, \xi \in \mathbb{R}^N.$ So in this case, the constant $C$ in Hausdorff-Young inequality \eqref{HYineq} becomes $1.$
	
{\it	From this point onward, we always assume that $N, k$ and $a$ either satisfy assumptions  of Lemma \ref{standing} with $N \geq 1, k \geq 0$ and $a+2\langle k \rangle+N-2>0,$ or, $a+2\langle k \rangle+N-3\geq 0$ without mentioning it explicitly.} In fact, our results will hold if we assume that $N\geq 1, k\geq 0$ and $a>0$ are such that $a+2\langle k \rangle+N-2>0$ and  the distribution kernel $B_{k,a}$ is uniformly bounded on $\mathbb{R}^N.$
	
	With having all the basics of $(k,a)$-generalised Fourier transform  we are now in a position to state our results. The main result of this paper is the following theorem $L^p$-$L^q$ boundedness of $(k,a)$-Fourier multipliers $A$ for the range $1<p \leq 2 \leq q<\infty.$ Indeed, we have 
	$$\|A\|_{L^p_{k, a}(\mathbb{R}^N) \rightarrow L^q_{k, a}(\mathbb{R}^N)}\lesssim \sup_{s>0} s \left[ \int_{\{ \xi \in \mathbb{R}^N: |h(\xi)|\geq s\}} d\mu_{k, a}(\xi) \right]^{\frac{1}{p}-\frac{1}{q}},$$ where $h$ is the symbol of the $(k,a)$-Fourier multiplier $A$, this means that, $\mathcal{F}_{k,a}(Af)(\xi)= h(\xi) \mathcal{F}_{k,a}f(\xi)$ for $\xi \in \mathbb{R}^N$ and for $f$ in a suitable function space. The main tool to establish this result is the following Hausdorff-Young Paley inequality for $(k,a)$-generalised Fourier transform: For $1<p\leq 2,$   $1<p \leq b \leq p' < \infty,$ where $p'= \frac{p}{p-1}$ and for a positive function $\psi$ defined on $\mathbb{R}^N$ we have  
\begin{equation} 
    \left( \int_{\mathbb{R}^N}  \left( |\mathcal{F}_{k, a}f(\xi)| \psi(\xi)^{\frac{1}{b}-\frac{1}{p'}} \right)^b d\mu_{k, a}(\xi)  \right)^{\frac{1}{b}} \lesssim \Big( \sup_{t>0} t \int_{\underset{\psi(\xi)\geq t}{\xi \in \mathbb{R}^N}} d\mu_{k, a}(\xi) \Big)^{\frac{1}{b}-\frac{1}{p'}} \|f\|_{L^p_{k, a}(\mathbb{R}^N)}.
	\end{equation}
	Next, we will present applications of our main results in the context of well-posedness of nonlinear abstract Cauchy problems in the space $L^\infty(0, T, L^2_{k,a}(\mathbb{R}^N)).$ First, we consider the heat equation 
 \begin{equation} \label{Heatinto} 
     u_t-|Bu(t)|^p=0,\quad u(0)=u_0,
 \end{equation}
 where $B$ is a linear operator on $L^2_{k, a}(\mathbb{R}^N)$ and $1<p<\infty.$ We study local well-posedness of the  heat equation \eqref{Heatinto} above. Secondly, we consider  
 the initial value problem  for the nonlinear wave equation  
\begin{align}\label{E-WNLEint}
u_{tt}(t) - b(t)|Bu(t)|^{p} = 0,
\end{align} with the initial condition $
u(0)=u_0, \,\,\, u_t(0)=u_1, $
where $b$ is a positive bounded function depending only on time, $B$ is a linear operator in $L^2_{k,a}(\mathbb{R}^N)$ and $1< p<\infty.$ We explore the global and local well-posedness of \eqref{E-WNLEint} under some condition on function $b.$

We organise the paper in following way: In the next section we will state and present the proof of Paley inequality and Hausdorff-Young-Paley inequality.   Then, we give the proof of our main result concerning the $L^p$-$L^q$ boundedness of $(k,a)$-Fourier multipliers and its consequences. In the last section, the applications of the results obtained in previous section will be discussed.

	\section{Main results}
	Throughout the paper, we shall use the notation $A \lesssim B$ to indicate $A\leq cB $ for a suitable constant $c >0$.
	In this section, we will present our main results. In the proofs we follow the ideas in the papers \cite{ARN, AR}.  The first result is the Paley inequality for the  $(k,a)$-generalised Fourier transform.
\begin{thm}\label{Paley} 
	      Suppose that $\psi$ is a positive function  on $\mathbb{R}^N$  satisfying the condition 
	    \begin{equation}
	        M_\psi := \sup_{t>0} t \int_{\underset{\psi(\xi)\geq t}{\xi \in \mathbb{R}^N}} d\mu_{k, a}(\xi) <\infty.
	        \end{equation}
	        Then for  $f \in L^p_{k,a}(\mathbb{R}^N),$ $1<p\leq 2,$ we have 
	        \begin{align} \label{Paleyin}
	            \left( \int_{\mathbb{R}^N} |\mathcal{F}_{k, a}(\xi)|^p\, \psi(\xi)^{2-p} d\mu_{k, a}(\xi) \right)^{\frac{1}{p}} \lesssim M_{\psi}^{\frac{2-p}{p}}\, \|f\|_{L^p_{k, a}(\mathbb{R}^N)}.
	        \end{align}
	\end{thm}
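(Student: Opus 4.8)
The plan is to follow the classical Paley-inequality argument adapted to the measure space $(\mathbb{R}^N, d\mu_{k,a})$, using the Hausdorff-Young inequality (Theorem \ref{HY}) together with an interpolation argument on the sublevel sets of $\psi$. The key observation is that the map $f \mapsto \mathcal{F}_{k,a}f$ can be viewed as a map into a weighted $L^p$ space, and that such a map is controlled by establishing weak-type bounds at the two endpoints $p=1$ and $p=2$ and then invoking the Marcinkiewicz interpolation theorem (or the Stein-Weiss interpolation theorem with change of measure).

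\emph{Step 1: Set up the measure $\nu$ and reformulate.} Define a measure $\nu$ on $\mathbb{R}^N$ by $d\nu(\xi) = \psi(\xi)^2\, d\mu_{k,a}(\xi)$, so that the left-hand side of \eqref{Paleyin} becomes
$$\left( \int_{\mathbb{R}^N} \bigl| \mathcal{F}_{k,a}f(\xi)\, \psi(\xi)^{-1} \bigr|^p\, d\nu(\xi) \right)^{1/p}.$$
Thus it suffices to show that the sublinear operator $Tf := \mathcal{F}_{k,a}f \cdot \psi^{-1}$ is bounded from $L^p_{k,a}(\mathbb{R}^N)$ into $L^p(\mathbb{R}^N, d\nu)$ with the stated constant.

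\emph{Step 2: Endpoint estimates.} At $p=2$, we want $T : L^2_{k,a} \to L^{2,\infty}(d\nu)$ (weak type $(2,2)$). For fixed $\lambda > 0$, the set $E_\lambda = \{\xi : |\mathcal{F}_{k,a}f(\xi)|\psi(\xi)^{-1} > \lambda\}$ has $\nu$-measure bounded, via Plancherel and the distributional bound on $\psi$ encoded in $M_\psi$, by a constant times $\lambda^{-2}\|f\|_{L^2_{k,a}}^2$; the computation is the standard layer-cake estimate $\nu(E_\lambda) = \int_{E_\lambda}\psi^2\,d\mu_{k,a} \lesssim \lambda^{-2}\int |\mathcal{F}_{k,a}f|^2\,d\mu_{k,a}$ after splitting $\psi^2$ over dyadic bands and using $M_\psi$. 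At $p=1$, we want $T : L^1_{k,a} \to L^{1,\infty}(d\nu)$: here the uniform boundedness of the kernel $B_{k,a}$ (Lemma \ref{standing}) gives $\|\mathcal{F}_{k,a}f\|_{L^\infty} \lesssim \|f\|_{L^1_{k,a}}$, and then $\nu(\{|Tf| > \lambda\}) = \nu(\{\psi < \lambda^{-1}|\mathcal{F}_{k,a}f|\}) \leq \nu(\{\psi < \lambda^{-1}\|f\|_{L^1_{k,a}}\cdot C\})$, which by definition of $M_\psi$ applied at level $t = \lambda/(C\|f\|_{L^1_{k,a}})$ is $\lesssim M_\psi \lambda^{-1}\|f\|_{L^1_{k,a}}$. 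So $T$ is weak $(1,1)$ with constant $\lesssim M_\psi$ and weak $(2,2)$ with constant $\lesssim 1$.

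\emph{Step 3: Interpolate.} Apply the Marcinkiewicz interpolation theorem between the weak $(1,1)$ and weak $(2,2)$ estimates. With $1/p = (1-\theta)/1 + \theta/2$, i.e. $\theta = 2 - 2/p = 2(p-1)/p$, one gets $\|T\|_{L^p_{k,a}\to L^p(d\nu)} \lesssim M_\psi^{1-\theta}\cdot 1^{\theta} = M_\psi^{(2-p)/p}$, which is exactly the constant in \eqref{Paleyin}. Unwinding the definition of $\nu$ and $T$ gives \eqref{Paleyin}. The main technical point to be careful about is the bookkeeping in Step 2 at the $L^2$ endpoint — verifying that $\int_{E_\lambda}\psi^2\,d\mu_{k,a}$ is genuinely controlled by $M_\psi\,\lambda^{-2}\|f\|_{L^2_{k,a}}^2$ requires decomposing the integral of $\psi^2$ into dyadic level sets $\{2^j \le \psi < 2^{j+1}\}$, bounding each such set's $\mu_{k,a}$-measure by $M_\psi 2^{-j}$, and summing the resulting geometric-type series against $|\mathcal{F}_{k,a}f|^2$ restricted appropriately — this is where the argument has to be done with some care, though it is entirely routine once set up. Everything else is a direct appeal to Lemma \ref{standing}, Plancherel for $\mathcal{F}_{k,a}$, and standard interpolation.
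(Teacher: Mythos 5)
Your proposal follows essentially the same route as the paper: the same sublinear operator $Tf=|\mathcal{F}_{k,a}f|/\psi$ viewed as a map into $L^p(\mathbb{R}^N,\psi^2\,d\mu_{k,a})$, the same two endpoint bounds (a $(2,2)$ bound from Plancherel and a weak $(1,1)$ bound from the uniform boundedness of the kernel $B_{k,a}$), and Marcinkiewicz interpolation with the correct exponent bookkeeping. Two points deserve correction, though neither is fatal.

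First, at the $(2,2)$ endpoint you describe a dyadic decomposition of $\psi^2$ over level bands using $M_\psi$; this is unnecessary and, as sketched, does not obviously sum. On the set $E_\lambda=\{|\mathcal{F}_{k,a}f|>\lambda\psi\}$ one has pointwise $\psi^2<\lambda^{-2}|\mathcal{F}_{k,a}f|^2$, so $\nu(E_\lambda)=\int_{E_\lambda}\psi^2\,d\mu_{k,a}\le\lambda^{-2}\|f\|^2_{L^2_{k,a}}$ by Plancherel alone; $M_\psi$ plays no role here, and indeed the final constant $M_\psi^{(2-p)/p}$ requires the $(2,2)$ norm to be $O(1)$, which you do assert. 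Second, and more substantively, at the $(1,1)$ endpoint you claim that $\nu(\{\psi<w\})\lesssim M_\psi w$ follows ``by definition of $M_\psi$.'' It does not: $M_\psi$ controls the $\mu_{k,a}$-measure of superlevel sets $\{\psi\ge t\}$, whereas you need to bound the $\psi^2$-weighted measure of a sublevel set. The paper devotes a genuine (if routine) computation to this, writing $\int_{\psi\le w}\psi^2\,d\mu_{k,a}=2\int_0^{w} t\,\mu_{k,a}(\{t\le\psi\le w\})\,dt\le 2\int_0^{w}t\cdot\frac{M_\psi}{t}\,dt=2M_\psi w$ via Fubini; note that the crude bound $w^2\,\mu_{k,a}(\{\psi<w\})$ fails since that set may have infinite measure. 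With this layer-cake step supplied, your argument is complete and coincides with the paper's.
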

	\begin{proof}  Let us consider a measure $\nu_{k, a}$ on $\mathbb{R}^N$ given by \begin{equation} \label{nu}
	    \nu_{k, a}(\xi)= \psi(\xi)^2 d\mu_{k, a}(\xi).
	\end{equation}
	We define the corresponding $L^p(\mathbb{R}^N, \nu_{k, a})$- space, $1 \leq p<\infty,$ as the space of all complex-valued function $f$ defined by $\mathbb{R}^N$ such that 
	$$ \|f\|_{L^p(\mathbb{R}^N, \nu_{k, a})}:= \left( \int_{\mathbb{R}^N} |f(\xi)|^p \, \psi(\xi)^2 d\mu_{k, a}(\xi) \right)^{\frac{1}{p}}<\infty.$$
	 We will show that the sublinear operator $T: L^p_{k, a}(\mathbb{R}^N) \rightarrow L^p(\mathbb{R}^N, \nu_{k, a})$ defined by 
	$$ Tf(\xi):= \frac{|\mathcal{F}_{k,a}f(\xi)|}{\psi(\xi)},\,\,\xi \in \mathbb{R}^N$$
is well-defined and bounded from $L^p_{k, a}(\mathbb{R}^N)$ to $L^p(\mathbb{R}^N, \nu_{k, a})$ for any $1 < p \leq 2.$	
In other words, we claim the following estimate:
\begin{equation} \label{vis10paley}
    \|Tf\|_{L^p(\mathbb{R}^N, \nu_{k,a})}= \left( \int_{\mathbb{R}^N} \frac{|\mathcal{F}_{k, a}f(\xi)|}{\psi(\xi)^p}\,\psi(\xi)^2 d\mu_{k, a}(\xi) \right)^{\frac{1}{p}} \lesssim M_{\psi}^{\frac{2-p}{p}} \|f\|_{L^p_{k,a}(\mathbb{R}^N)},
\end{equation}
which will give us the required inequality \eqref{Paleyin} with $M_\psi := \sup_{t>0} t \int_{\underset{\psi(\xi) \geq t}{\xi \in \mathbb{R}^N}} d\mu_{k,a}(\xi).$ 
	We will show that $T$ is weak-type $(2,2)$ and weak-type $(1,1).$  More precisely, with the distribution function,  
$$\nu_{k, a}(y; Tf)= \int_{\underset{\frac{|\mathcal{F}_{k, a}f(\xi)|} {\psi(\xi)} \geq y}{\xi \in \mathbb{R}^N}} \psi(\xi)^2 d\mu_{k, a}(\xi), $$ where $\nu_{k, a}$ is given by formula \eqref{nu}, we show that 
\begin{equation} \label{vish5.4}
    \nu_{k, a}(y; Tf) \leq \left( \frac{M_2 \|f\|_{L^2_{k, a}(\mathbb{R}^N)}}{y} \right)^2 \,\,\,\,\,\,\text{with norm}\,\, M_2=1,
\end{equation}
\begin{equation} \label{vish5.5}
    \nu_{k, a}(y; Tf) \leq \frac{M_1 \|f\|_{L^1_{k, a}(\mathbb{R}^N)}}{y}\,\,\,\,\,\,\text{with norm}\,\, M_1=M_\psi.
\end{equation} 
	Then the estimate \eqref{vis10paley} follows from the Marcinkiewicz interpolation Theorem. Now, to show \eqref{vish5.4}, using the Plancherel identity we get
	\begin{align*}
    y^2 \nu_{k, a}(y; Tf)&\leq \sup_{y>0}y^2 \nu_{k, a}(y; Tf)=: \|Tf\|^2_{L^{2, \infty}(\mathbb{R}^N, \nu_{k, a})}  \leq \|Tf\|^2_{L^2(\mathbb{R}^N, \nu_{k, a})} \\&= \int_{\mathbb{R}^N} \left( \frac{|\mathcal{F}_{k, a}(\xi)|}{\psi(\xi)} \right)^2 \psi(\xi)^2 d\mu_{k, a}(\xi) \\&= \int_{\mathbb{R}^N} |\mathcal{F}_{k, a}(\xi)|^2\, d\mu_{k, a}(\xi)  = \|f\|_{L^2_{k, a}(\mathbb{R}^N)}^2.  \end{align*}
Thus, $T$ is type $(2,2)$ with norm $M_2 \leq 1.$ Further, we show that $T$ is of weak type $(1,1)$ with norm $M_1=M_\psi$; more precisely, we show that 
\begin{align} \label{11weak}
    \nu_{k, a} \left\{ \xi \in \mathbb{R}^N: \frac{|\mathcal{F}_{k, a}(\xi)|}{\psi(\xi)}>y \right\} \lesssim M_\psi \frac{\|f\|_{L^1_{k, a}(\mathbb{R}^N)}}{y}.
\end{align}
The left hand side of \eqref{11weak} is an integral $ \int \psi(\xi)^2 d\mu_{k, a}(\xi)$ taken over all those $\xi \in \mathbb{R}^N$ for which $\frac{|\mathcal{F}_{k, a}f(\xi)|}{\psi(\xi)}>y.$
Since $|\mathcal{F}_{k, a}f(\xi)| \lesssim \|f\|_{L^1_{k, a}(\mathbb{R}^N)}$ for all $\xi \in \mathbb{R}^N$ we have
$$\left\{ \xi \in \mathbb{R}^N: \frac{|\mathcal{F}_{k, a}f(\xi)|}{\psi(\xi)}>y \right\} \subset \left\{ \xi \in \mathbb{R}^N: \frac{\|f\|_{L^1_{k, a}(\mathbb{R}^N)}}{\psi(\xi)} \gtrsim y \right\},$$ 
 for any $y>0$ and, therefore,
$$\nu_{k, a} \left\{ \xi \in \mathbb{R}^N: \frac{|\mathcal{F}_{k, a}(\xi)|}{\psi(\xi)}>y \right\} \leq \nu_{k, a} \left\{ \xi \in \mathbb{R}^N: \frac{\|f\|_{L^1_{k,a}(\mathbb{R}^N)}}{\psi(\xi)}\gtrsim y \right\}.$$
Now by setting $w:=\frac{\|f\|_{L^1_{k,a}(\mathbb{R}^N)}}{y},$ we have 
\begin{align}
    \nu_{k, a} \left\{ \xi \in \mathbb{R}^N: \frac{\|f\|_{L^1_{k, a}(\mathbb{R}^N)}}{\psi(\xi)} \gtrsim y \right\} \leq  \int_{\overset{\xi \in \mathbb{R}^N}{\psi(\xi) \lesssim w} } \psi(\xi)^2\, d\mu_{k, a}(\xi).
\end{align}
Now we claim that 
\begin{align} \label{claim}
    \int_{\overset{\xi \in \mathbb{R}^N}{\psi(\xi) \lesssim w} } \psi(\xi)^2\, d\mu_{k, a}(\xi) \lesssim M_\psi w.
\end{align}
Indeed, first we notice that 
\begin{align*}
    \int_{\overset{\xi \in \mathbb{R}^N}{\psi(\xi) \lesssim w} } \psi(\xi)^2\, d\mu_{k, a}(\xi) = \int_{\overset{\xi \in \mathbb{R}^N}{\psi(\xi) \leq c w} } \, d\mu_{k, a}(\xi) \int_0^{\psi(\xi)^2} d\tau, 
\end{align*} for some $c>0.$
By interchanging the order of integration we get 
\begin{align*}
    \int_{\overset{\xi \in \mathbb{R}^N}{\psi(\xi) \leq c w} } \,d\mu_{k, a}(\xi) \int_0^{\psi(\xi)^2} d\tau = \int_{0}^{c^2w^2} d\tau \int_{\underset{\tau^{\frac{1}{2}} \leq \psi(\xi) \leq c w}{\xi \in \mathbb{R}^N}} d\mu_{k, a}(\xi).
\end{align*}
Further, by making substitution  $\tau= t^2,$ it gives 
\begin{align*}
    \int_{0}^{c^2w^2} d\tau \int_{\underset{\tau^{\frac{1}{2}} \leq \psi(\xi) \leq c w}{\lambda \in \mathbb{R}^N}} d\mu_{k, a}(\xi) &= 2 \int_0^{cw} t\, dt \int_{\underset{t \leq \psi(\xi) \leq c w}{\xi \in \mathbb{R}^N}} d\mu_{k, a}(\xi) \\&\lesssim  \int_0^{cw} t\, dt \int_{\underset{t \leq \psi(\xi) }{\xi \in \mathbb{R}^N}} d\mu_{k, a}(\xi).
\end{align*}
Since 
$$ t \int_{\underset{t \leq \psi(\xi) }{\xi \in \mathbb{R}^N}} d\mu_{k, a}(\xi) \leq \sup_{t>0} t \int_{\underset{t \leq \psi(\xi) }{\xi \in \mathbb{R}^N}} d\mu_{k, a}(\xi) = M_\psi $$ is finite by assumption $M_\psi<\infty,$ we have 
\begin{align*}
     \int_0^w t\, dt \int_{\underset{t \leq \psi(\xi) }{\xi \in \mathbb{R}^N}} d\mu_{k, a}(\xi) \lesssim M_\psi w.
\end{align*}
This establishes our claim \eqref{claim} and eventually proves \eqref{11weak}. Therefore, we have proved \eqref{vish5.4} and \eqref{vish5.5}. Then  by using the Marcinkiewicz interpolation theorem with $p_1=1$ and $p_2=2$ and $\frac{1}{p}= 1-\theta+\frac{\theta}{2}$ we now obtain
$$\left( \int_{\mathbb{R}_+} \left(\frac{|\mathcal{F}_{k, a}f(\xi)|}{\psi(\xi)} \right)^p \psi(\xi)^2\, d\mu_{k, a}(\xi) \right)^{\frac{1}{p}}= \|Tf\|_{L^p(\mathbb{R}^N,\, \nu_{k, a})} \lesssim M_\psi^{\frac{2-p}{p}} \|f\|_{L^p_{k, a}(\mathbb{R}^N)}.$$
This completes the proof of the theorem. \end{proof}


Next we record the following  interpolation theorem from \cite{BL} for further use.  
\begin{thm}\label{interpolationoperator} Let $d\mu_0(x)= \omega_0(x) d\mu'(x)$ and $d\mu_1(x)= \omega_1(x) d\mu'(x).$ Suppose that $0<p_0, p_1< \infty.$  If a continuous linear operator $A$ admits bounded extensions, $A: L^p(Y,\mu)\rightarrow L^{p_0}(\omega_0) $ and $A: L^p(Y,\mu)\rightarrow L^{p_1}(\omega_1).$  Then, there exists a bounded extension $A: L^p(Y,\mu)\rightarrow L^{b}(\omega) $ of $A$, where  $0<\theta<1, \, \frac{1}{b}= \frac{1-\theta}{p_0}+\frac{\theta}{p_1}$ and 
 $\omega= \omega_0^{\frac{b(1-\theta)}{p_0}} \omega_1^{\frac{b\theta}{p_1}}.$
\end{thm}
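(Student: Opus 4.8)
The plan is to obtain this as the (degenerate) Stein--Weiss change-of-measure interpolation theorem, proved directly by the complex three-line method in the spirit of Riesz--Thorin, the only additional ingredient being that the densities $\omega_0,\omega_1$ are carried along the analytic family. The special feature here is that the domain space $L^p(Y,\mu)$ is the \emph{same} at both ends, so in the complex method it stays fixed and only the target side genuinely interpolates; concretely, it suffices to prove
$$\|Af\|_{L^b(\omega)}\ \le\ \|A\|_{L^p(Y,\mu)\to L^{p_0}(\omega_0)}^{\,1-\theta}\ \|A\|_{L^p(Y,\mu)\to L^{p_1}(\omega_1)}^{\,\theta}\ \|f\|_{L^p(Y,\mu)}$$
for $f$ in a dense subclass of $L^p(Y,\mu)$ (simple functions of finite $\mu$-measure support), and then extend by density. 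Equivalently, one is proving the continuous inclusion $\big[L^{p_0}(\omega_0),L^{p_1}(\omega_1)\big]_{[\theta]}\hookrightarrow L^b(\omega)$ for the complex interpolation functor and applying its interpolation property to the trivial couple $\big(L^p(Y,\mu),L^p(Y,\mu)\big)$.

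First I would dualise the left-hand side. Assuming $b\ge1$ (the range relevant below, where $1<p\le b\le p'<\infty$), duality with respect to $d\mu'$ gives
$$\|Af\|_{L^b(\omega)}=\sup\Big\{\Big|\int (Af)\,g\,d\mu'\Big|:\ g\ \text{simple},\ \|g\|_{L^{b'}(\omega^{1-b'})}\le1\Big\},$$
where $\tfrac1b+\tfrac1{b'}=1$ and the exponent $1-b'=-b'/b$ is exactly the one for which Hölder in the pair $\big(L^b(\omega),L^{b'}(\omega^{1-b'})\big)$ is sharp. So it is enough to bound $\big|\int (Af)\,g\,d\mu'\big|$ for fixed simple $f,g$ with $g$ normalised as above.

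The heart of the matter is the analytic family. With $S=\{z:0<\Real z<1\}$ and $\tfrac1{q(z)}=\tfrac{1-z}{p_0'}+\tfrac{z}{p_1'}$ (so $q(\theta)=b'$), put
$$g_z(x)=|g(x)|^{\,b'/q(z)}\,\frac{g(x)}{|g(x)|}\;\omega_0(x)^{(1-z)(p_0'-1)/p_0'}\;\omega_1(x)^{z(p_1'-1)/p_1'}\;\omega(x)^{(1-b')/q(z)}.$$
Using the two defining relations $\tfrac1b=\tfrac{1-\theta}{p_0}+\tfrac{\theta}{p_1}$ and $\omega^{1/b}=\omega_0^{(1-\theta)/p_0}\omega_1^{\theta/p_1}$, one checks that the weight exponents collapse at $z=\theta$, giving $g_\theta=g$, while on the line $\Real z=0$ one has $|g_{it}|^{p_0'}\omega_0^{1-p_0'}=|g|^{b'}\omega^{1-b'}$ (and symmetrically on $\Real z=1$), so that $\|g_{it}\|_{L^{p_0'}(\omega_0^{1-p_0'})}\le1$ and $\|g_{1+it}\|_{L^{p_1'}(\omega_1^{1-p_1'})}\le1$. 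Now set $\Phi(z)=\int (Af)\,g_z\,d\mu'$; for simple $f,g$ and $Af\in L^{p_0}(\omega_0)+L^{p_1}(\omega_1)$ this is bounded and analytic on $\overline S$, Hölder together with the two hypotheses gives $|\Phi(it)|\le\|A\|_{0}\,\|f\|_{L^p(Y,\mu)}$ and $|\Phi(1+it)|\le\|A\|_{1}\,\|f\|_{L^p(Y,\mu)}$, and the Hadamard three-line theorem yields $\big|\int (Af)g\,d\mu'\big|=|\Phi(\theta)|\le\|A\|_0^{1-\theta}\|A\|_1^{\theta}\|f\|_{L^p(Y,\mu)}$. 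Taking the supremum over $g$ and extending by density from simple functions completes the argument.

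The step I expect to be the real obstacle is the weight bookkeeping in $g_z$: the complex powers of $\omega_0,\omega_1,\omega$ must be arranged so that simultaneously $g_\theta=g$ and each of the two boundary norms is precisely the unit ball of $L^{b'}(\omega^{1-b'})$; and one must ensure that $z\mapsto\omega_i(x)^{cz}=e^{\,cz\log\omega_i(x)}$ really is an admissible analytic family, which needs $\log\omega_i$ locally bounded. The latter is handled, as in \cite{BL}, by first restricting to the sublevel sets $\{|\log\omega_0|+|\log\omega_1|\le R\}$ and letting $R\to\infty$ by monotone convergence. Finally, the case $b<1$ (possible when $p_0$ or $p_1<1$) cannot be reached by duality and requires the separate argument in \cite{BL}; it plays no role in the applications of this paper, where $1<p\le b\le p'<\infty$.
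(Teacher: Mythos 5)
The paper gives no proof of this statement: it is quoted verbatim from Bergh--L\"ofstr\"om \cite{BL} (the Stein--Weiss interpolation theorem with change of measure, combined with the interpolation property of the complex method applied to the constant couple $(L^p(Y,\mu),L^p(Y,\mu))$). Your argument is a correct, self-contained rendition of exactly that standard proof: the weight exponents in $g_z$ do collapse as claimed (at $z=\theta$ the factor is $\omega_0^{(1-\theta)/p_0}\omega_1^{\theta/p_1}\omega^{-1/b}=1$, and on $\Real z=0$ one gets $|g_{it}|^{p_0'}\omega_0^{1-p_0'}=|g|^{b'}\omega^{1-b'}$ since $p_0'/p_0+1-p_0'=0$), so the three-line argument closes. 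The only restriction is the one you flag yourself: the duality step needs $b\ge 1$, so you prove the theorem only in that range rather than for all $0<p_0,p_1<\infty$ as stated; this is harmless here, since every application in the paper has $1<p\le b\le p'<\infty$.
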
 

Now, we use the previous theorem to establish the  Hausdorff-Young-Paley inequality using the interpolation between Hausdorff-Young inequality and Paley inequality for $(k,a)$-generalised Fourier transform. 
\begin{thm} \label{HYP} Let $1<p\leq 2,$ and let   $1<p \leq b \leq p' < \infty,$ where $p'= \frac{p}{p-1}.$ If $\psi$ is a positive function on $\mathbb{R}^N$ such that 
 \begin{equation}
	        M_\psi := \sup_{t>0} t \int_{\underset{\psi(\xi)\geq t}{\lambda \in \mathbb{R}^N}} d\mu_{k, a}(\xi)
	        \end{equation}
is finite then, for every $f \in L^p_{k, a}(\mathbb{R}^N),$ 
 we have
\begin{equation} \label{Vish5.9}
    \left( \int_{\mathbb{R}^N}  \left( |\mathcal{F}_{k, a}f(\xi)| \psi(\xi)^{\frac{1}{b}-\frac{1}{p'}} \right)^b d\mu_{k, a}(\xi)  \right)^{\frac{1}{b}} \lesssim M_\varphi^{\frac{1}{b}-\frac{1}{p'}} \|f\|_{L^p_{k, a}(\mathbb{R}^N)}.
\end{equation}
\end{thm}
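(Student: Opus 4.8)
The plan is to deduce \eqref{Vish5.9} in one stroke by interpolating the Paley inequality (Theorem~\ref{Paley}) against the Hausdorff--Young inequality (Theorem~\ref{HY}), using the change-of-measure interpolation theorem (Theorem~\ref{interpolationoperator}) applied to the linear operator $A=\mathcal{F}_{k,a}$. First I would cast the two endpoint estimates in the form Theorem~\ref{interpolationoperator} requires. Take $d\mu'=d\mu_{k,a}$. With $\omega_0(\xi)=\psi(\xi)^{2-p}$, inequality \eqref{Paleyin} says precisely that $A\colon L^p_{k,a}(\mathbb{R}^N)\to L^{p}(\omega_0\,d\mu_{k,a})$ is bounded with norm $\lesssim M_\psi^{(2-p)/p}$; and with $\omega_1\equiv 1$, Theorem~\ref{HY} says $A\colon L^p_{k,a}(\mathbb{R}^N)\to L^{p'}(\omega_1\,d\mu_{k,a})=L^{p'}_{k,a}(\mathbb{R}^N)$ is bounded with norm at most the absolute constant $C=M^{2/p-1}$. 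Both estimates have the \emph{same} domain $L^p_{k,a}(\mathbb{R}^N)$, which is what the theorem needs.

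Next I would fix the interpolation parameter. Given $b$ with $p\le b\le p'$, let $\theta\in[0,1]$ be determined by $\tfrac1b=\tfrac{1-\theta}{p}+\tfrac{\theta}{p'}$; since $\tfrac1p-\tfrac1{p'}=\tfrac{2-p}{p}$ this means $\theta=\tfrac{1-p/b}{2-p}$ when $p<2$ (the case $p=2$ forces $b=2$, where \eqref{Vish5.9} is just the Plancherel identity). The endpoints $b=p$ and $b=p'$ give $\theta=0$ and $\theta=1$ respectively, so the hypothesis $p\le b\le p'$ is exactly $\theta\in[0,1]$. Theorem~\ref{interpolationoperator} then produces a bounded extension $A\colon L^p_{k,a}(\mathbb{R}^N)\to L^{b}(\omega\,d\mu_{k,a})$ with weight $\omega=\omega_0^{b(1-\theta)/p}\,\omega_1^{b\theta/p'}=\psi^{(2-p)b(1-\theta)/p}$, and, in its quantitative (Stein--Weiss) form, operator norm $\lesssim\big(M_\psi^{(2-p)/p}\big)^{1-\theta}C^{\theta}$.

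The final step is the exponent bookkeeping. Substituting $\theta=\tfrac{1-p/b}{2-p}$ one computes $(2-p)\tfrac{b(1-\theta)}{p}=1-\tfrac{b}{p'}=b\big(\tfrac1b-\tfrac1{p'}\big)$ and $\tfrac{2-p}{p}(1-\theta)=\tfrac1b-\tfrac1{p'}$, so that $\omega(\xi)=\psi(\xi)^{b(1/b-1/p')}$ and the norm bound is $\lesssim M_\psi^{1/b-1/p'}$ (the factor $C^{\theta}$ being an absolute constant). Since $\int_{\mathbb{R}^N}\big(|\mathcal{F}_{k,a}f(\xi)|\,\psi(\xi)^{1/b-1/p'}\big)^b\,d\mu_{k,a}(\xi)=\|\mathcal{F}_{k,a}f\|_{L^{b}(\omega\,d\mu_{k,a})}^{b}$, this is exactly \eqref{Vish5.9} (with $M_\psi$ in place of the misprinted $M_\varphi$).

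I do not expect a genuine obstacle: the argument is a single interpolation. The points that require care are the exponent algebra in the last step (checking that $(2-p)b(1-\theta)/p$ collapses to $1-b/p'$ and that the same computation governs the norm exponent), and the legitimacy of invoking Theorem~\ref{interpolationoperator} --- namely that one interpolates the \emph{linear} operator $\mathcal{F}_{k,a}$ itself (the Paley bound \eqref{Paleyin} is stated for $\mathcal{F}_{k,a}$, not merely for the sublinear majorant used in its proof), that both target spaces are weighted $L^r$ spaces over the common base measure $d\mu_{k,a}$, and that the cited version from \cite{BL} delivers the interpolated \emph{norm} estimate $N_0^{1-\theta}N_1^{\theta}$ and not just boundedness.
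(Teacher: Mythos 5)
Your proposal is correct and follows essentially the same route as the paper: both interpolate the linear operator $\mathcal{F}_{k,a}$ between the Paley endpoint ($b=p$, weight $\psi^{2-p}$) and the Hausdorff--Young endpoint ($b=p'$, weight $1$) via Theorem~\ref{interpolationoperator}, with the same choice of $\theta$ and the same weight computation $\omega=\psi^{1-b/p'}$. Your exponent bookkeeping is in fact slightly more careful than the paper's (which has a typographical $p$ where $b$ should appear in the weight formula), and your explicit tracking of the interpolated norm $M_\psi^{1/b-1/p'}$ is the intended reading.
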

This naturally reduced to the Hausdorff-Young inequality \eqref{HYineq} when $b=p'$ and to the Paley inequality \eqref{Paleyin} when $b=p.$
\begin{proof}
From Theorem \ref{Paley}, the operator  defined by 
$$Af(\xi)= \mathcal{F}_{k, a} f(\xi),\,\,\,\,\xi \in \mathbb{R}^N$$
is bounded from $L^p_{k, a}(\mathbb{R}^N)$ to $L^{p}(\mathbb{R}^N,\omega_0  d\mu'),$ where $d\mu'(\xi)=d\mu_{k, a}(\xi)$ and $\omega_{0}(\xi)=  \psi(\xi)^{2-p}.$ From Theorem \ref{HY}, we deduce that $A:L^p_{k, a}(\mathbb{R}^N) \rightarrow L^{p'}(\mathbb{R}^N, \omega_1 d\mu')$ with $d\mu'(\xi)=d\mu_{k, a}(\xi)$ and   $\omega_1(\xi)= 1$  admits a bounded extension. By using the real interpolation (Theorem \ref{interpolationoperator} above) we will prove that $A:L^p_{k, a}(\mathbb{R}^N) \rightarrow L^{b}(\mathbb{R}^N, \omega d\mu'),$ $p\leq b\leq p',$ is bounded,
where the space $L^p(\mathbb{R}^N,\, \omega d\mu')$ is defined by the norm 
$$\|\sigma\|_{L^p(\mathbb{R}^N,\, \omega d\mu')}:=\left( \int_{ \mathbb{R}^N} |\sigma(\xi)|^p \omega(\xi) \,d\mu'(\xi) \right)^{\frac{1}{p}}= \left( \int_{ \mathbb{R}^N} |\sigma(\xi)|^p \omega(\xi) d\mu_{k, a}(\xi) \right)^{\frac{1}{p}}$$
 and $\omega(\xi)$ is positive function over $\mathbb{R}^N$ to be determined. To compute $\omega,$ we can use Theorem \ref{interpolationoperator}, by fixing $\theta\in (0,1)$ such that $\frac{1}{b}=\frac{1-\theta}{p}+\frac{\theta}{p'}$. In this case $\theta=\frac{p-b}{b(p-2)},$ and 
 \begin{equation}
     \omega= \omega_0^{\frac{p(1-\theta)}{p_0}} \omega_1^{\frac{p\theta}{p_1}}= \psi(\xi)^{1-\frac{b}{p'}}.     
 \end{equation}
 Thus we finish the proof.
\end{proof}

An  operator  $A$ is a  Fourier multiplier then there exists a measurable function $h :\R^N \rightarrow \mathbb{C},$ known as the symbol associated with $A,$ such that $$\mathcal{F}_{k, a}(Af)(\xi)=h(\xi) \mathcal{F}_{k,a}f(\xi),\,\,\,\, \xi \in \R^N,$$ for all $f$ belonging to a suitable function space on $\R^N.$  
	    In the next result, we show that if the symbol $h$ of a Fourier multipliers $A$ defined on $C_c^\infty(\R^N)$ satisfies certain H\"ormander's condition then $A$ can be extended as a bounded linear operator from $L^p_{k, a}(\R^N)$ to $L^q_{k, a}(\R^N)$ for the range $1<p \leq 2 \leq q <\infty.$

\begin{thm} \label{Jacobimult}  Let $1<p \leq 2 \leq q<\infty.$  Suppose that $A$ is a  Fourier multiplier with symbol $h.$ Then we have 
$$\|A\|_{L^p_{k, a}(\mathbb{R}^N) \rightarrow L^q_{k, a}(\mathbb{R}^N)}\lesssim \sup_{s>0} s \left[ \int_{\{ \xi \in \mathbb{R}^N: |h(\xi)|\geq s\}} d\mu_{k, a}(\xi) \right]^{\frac{1}{p}-\frac{1}{q}}.$$
   \end{thm}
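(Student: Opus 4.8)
The plan is to combine the (inverse) Hausdorff--Young inequality for $\mathcal{F}_{k,a}$ with the Hausdorff--Young--Paley inequality of Theorem \ref{HYP}, after reducing by duality to the subrange $p\le q'$, where $q'=\frac{q}{q-1}$. For the reduction: since $1<p,q<\infty$, the operator $A$ extends boundedly $L^p_{k,a}(\mathbb{R}^N)\to L^q_{k,a}(\mathbb{R}^N)$ if and only if $A^{*}$ extends boundedly $L^{q'}_{k,a}(\mathbb{R}^N)\to L^{p'}_{k,a}(\mathbb{R}^N)$, with the same norm; and the Plancherel identity for the unitary $\mathcal{F}_{k,a}$ shows that $A^{*}g=\mathcal{F}_{k,a}^{-1}(\overline{h}\,\mathcal{F}_{k,a}g)$, i.e.\ $A^{*}$ is again a $(k,a)$-Fourier multiplier, with symbol $\overline{h}$. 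Because $|\overline{h}|=|h|$ and $\frac{1}{q'}-\frac{1}{p'}=\frac1p-\frac1q$, the right-hand side of the asserted estimate is unchanged on passing from $(A,p,q)$ to $(A^{*},q',p')$; moreover $1<q'\le2\le p'<\infty$, and when $p>q'$ one has $q'<p=(p')'$. It therefore suffices to prove the theorem assuming additionally $p\le q'$, which I do from now on.

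Next I would establish the inverse Hausdorff--Young inequality $\|g\|_{L^q_{k,a}(\mathbb{R}^N)}\lesssim\|\mathcal{F}_{k,a}g\|_{L^{q'}_{k,a}(\mathbb{R}^N)}$ for $2\le q<\infty$: since $\mathcal{F}_{k,a}$ is unitary on $L^2_{k,a}(\mathbb{R}^N)$ we have $\mathcal{F}_{k,a}^{-1}=\mathcal{F}_{k,a}^{*}$, whose integral kernel is $\overline{c_{k,a}\,B_{k,a}(\xi,x)}$ by the symmetry of $B_{k,a}$; this has the same modulus as the kernel of $\mathcal{F}_{k,a}$ and is therefore uniformly bounded under our standing hypotheses, so interpolating the resulting $L^1_{k,a}\to L^\infty_{k,a}$ estimate with $L^2$-unitarity (exactly as in the proof of Theorem \ref{HY}) gives the claim. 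Applying it to $g=Af$ and using $\mathcal{F}_{k,a}(Af)=h\,\mathcal{F}_{k,a}f$ yields
\[
\|Af\|_{L^q_{k,a}(\mathbb{R}^N)}\ \lesssim\ \Big(\int_{\mathbb{R}^N}\big(|h(\xi)|\,|\mathcal{F}_{k,a}f(\xi)|\big)^{q'}\,d\mu_{k,a}(\xi)\Big)^{1/q'}.
\]

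Finally I would apply Theorem \ref{HYP} with $b:=q'$ and $\psi:=|h|^{1/\kappa}$, where $\kappa:=\frac1p-\frac1q\in(0,1)$. This is admissible: the constraint $1<p\le b\le p'<\infty$ holds since $p>1$, since $p\le q'=b$ (the reduction just made), and since $b=q'\le p'$ (because $p\le q$); and $\frac1b-\frac1{p'}=\frac{1}{q'}-\frac{1}{p'}=\kappa$, so $\psi^{\,1/b-1/p'}=|h|$ and the left-hand side of Theorem \ref{HYP} coincides with the right-hand side of the last display. Theorem \ref{HYP} therefore bounds that quantity by $M_\psi^{\kappa}\,\|f\|_{L^p_{k,a}(\mathbb{R}^N)}$, and the substitution $s=t^{\kappa}$ in the definition of $M_\psi$ gives
\[
M_\psi^{\kappa}=\Big(\sup_{t>0}t\int_{\{\xi\in\mathbb{R}^N:\,\psi(\xi)\ge t\}}d\mu_{k,a}(\xi)\Big)^{\kappa}=\sup_{s>0}s\Big[\int_{\{\xi\in\mathbb{R}^N:\,|h(\xi)|\ge s\}}d\mu_{k,a}(\xi)\Big]^{\frac1p-\frac1q},
\]
which is precisely the asserted bound; chaining the two displays finishes the proof.

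Most of this is bookkeeping, so the two points I would be most careful about are (a) the justification that $\mathcal{F}_{k,a}^{-1}$ inherits the uniform kernel bound of Lemma \ref{standing} --- needed to pass from the Fourier side back to the physical side --- and (b) the verification of the admissibility window $p\le b\le p'$ of Theorem \ref{HYP} with $b=q'$: it is exactly the failure of $p\le q'$ in part of the range $1<p\le2\le q<\infty$ that forces the duality reduction rather than a single direct estimate.
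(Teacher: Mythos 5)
Your proposal is correct and follows essentially the same route as the paper's proof: duality reduction to the subrange $p\le q'$, the inverse Hausdorff--Young inequality applied to $Af$, and then Theorem \ref{HYP} with $b=q'$ and $\psi=|h|^{r}$, $\frac1r=\frac1p-\frac1q$, followed by the same change of variables in the supremum. The only difference is that you spell out the justification of the step $\|Af\|_{L^q_{k,a}}\lesssim\|\mathcal{F}_{k,a}(Af)\|_{L^{q'}_{k,a}}$ via the uniform boundedness of the kernel of $\mathcal{F}_{k,a}^{-1}$, which the paper leaves implicit.
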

\begin{proof} 
 Let us first assume that $p \leq q',$ where $\frac{1}{q}+\frac{1}{q'}=1.$ Since $q' \leq 2,$ the Hausdorff-Young inequality gives that 
 \begin{align*}
     \|Af\|_{L^q_{k, a}(\mathbb{R}^N)} \leq \|\mathcal{F}_{k, a}(Af)\|_{L^{q'}_{k, a}(\mathbb{R}^N)} = \|h \mathcal{F}_{k, a}f\|_{L^{q'}_{k, a}(\mathbb{R}^N)}
 \end{align*}
 
 The case $q' \leq (p')'=p$ can be reduced to the case $p \leq q'$ as follows. Using the duality of $L^p$-spaces we have $\|A\|_{L^p_{k, a}(\mathbb{R}^N) \rightarrow L^q(\mathbb{R}^N)}= \|A^*\|_{L^{q'}_{k, a}(\mathbb{R}^N) \rightarrow L^{p'}_{k, a}(\mathbb{R}^N)}.$ The symbol of adjoint operator $A^*$  is equal to $\check{h},$ which equal to  $h$ and obviously we have $|\check{h}|= |h|$ (see Theorem 4.2 in \cite{ARN1}). 
 Now, we are in a position to apply Theorem \ref{HYP}. Set $\frac{1}{p}-\frac{1}{q}=\frac{1}{r}.$ Now, by applying  Theorem \ref{HYP} with $\psi= |h|^r$ with $b=q'$ we get 
 $$\|h \mathcal{F}_{k, a}f\|_{L^{q'}(\mathbb{R}_+, Adx)} \lesssim \left(  \sup_{s>0} s \int_{\underset{|h(\xi)|^r > s}{\xi \in \mathbb{R}^N}} d\mu_{k, a}(\xi)   \right)^{\frac{1}{r}} \|f\|_{L^p_{k, a}(\mathbb{R}^N)} $$ for all $f \in L^p_{k, a}(\mathbb{R}^N),$ in view of $\frac{1}{p}-\frac{1}{q}=\frac{1}{q'}-\frac{1}{p'}=\frac{1}{r.}$ Thus, for $1<p \leq 2 \leq q<\infty,$ we obtain 
 
 $$\|Af\|_{L^q_{k, a}(\mathbb{R}^N)} \lesssim \left(  \sup_{s>0} s \int_{\underset{|h(\xi)|^r > s}{\xi \in \mathbb{R}^N}} d\mu_{k, a}(\xi)   \right)^{\frac{1}{r}} \|f\|_{L^p_{k, a}(\mathbb{R}^N)}.$$
 Further, the proof follows from the following inequality: 
 \begin{align*}
     \left(  \sup_{s>0} s \int_{\underset{|h(\xi)|^r > s}{\xi \in \mathbb{R}^N}} d\mu_{k, a}(\xi)  \right)^{\frac{1}{r}} &= \left(  \sup_{s>0} s \int_{\underset{|h(\xi)| > s^{\frac{1}{r}}}{\xi \in \mathbb{R}^N}} d\mu_{k, a}(\xi)    \right)^{\frac{1}{r}} \\&=  \left(  \sup_{s>0} s^{\frac{1}{r}} \int_{\underset{|h(\xi)| > s} {\xi \in \mathbb{R}^N}} d\mu_{k, a}(\xi)    \right)^{\frac{1}{r}} \\&= \sup_{s>0} s \left(   \int_{\underset{|h(\xi)| > s} {\xi \in \mathbb{R}^N}} d\mu_{k, a}(\xi)    \right)^{\frac{1}{r}},
 \end{align*} proving Theorem \ref{Jacobimult}.
\end{proof}

\begin{rem}
For $a=2$ and $k\equiv 0,$ we recover the classical theorem of H\"ormander \cite{Hormander1960} on $L^p$-$L^q$ boundedness of Fourier multipliers on $\mathbb{R}^N $ as in this case  $\mathcal{F}_{k,a}$ and $\mu_{k,a}$ become the Euclidean Fourier transform and the Lebesgue measure on $\mathbb{R}^N,$ respectively.   
\end{rem}

As an application of Theorem \ref{Jacobimult} we get the following result. 
\begin{cor}
 Let $0<\gamma< 2 \langle k \rangle+N+a-2$ and let $h$ be a measurable function on $\mathbb{R}^N$ such that
 $$|h(\xi)| \lesssim \|\xi \|^{-\gamma},$$ where $\|\xi\|$ is the Euclidean norm of $\xi \in \mathbb{R}^N.$   Then the $(k,a)$-Fourier multiplier $T_h$ with symbol $h$ is bounded from $L^p_{k,a}(\mathbb{R}^N)$ to $L^q_{k, a}(\mathbb{R}^N)$ provided that \begin{equation} \label{assu}
     1 < p \leq 2 \leq q < \infty,\,\, \frac{1}{p}-\frac{1}{q}=\frac{\gamma}{2 \langle k \rangle+N+a-2}.
 \end{equation}
\end{cor}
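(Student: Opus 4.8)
The plan is to deduce the corollary directly from the $L^p$-$L^q$ multiplier bound in Theorem~\ref{Jacobimult}. Writing $Q:=2\langle k\rangle+N+a-2$, the hypothesis is that $\frac1p-\frac1q=\frac{\gamma}{Q}$ with $0<\gamma<Q$, so by Theorem~\ref{Jacobimult} it suffices to prove that
$$\sup_{s>0}\, s\left[\int_{\{\xi\in\mathbb{R}^N:\,|h(\xi)|\geq s\}}d\mu_{k,a}(\xi)\right]^{\frac1p-\frac1q}<\infty.$$
First I would use the pointwise bound $|h(\xi)|\le C\|\xi\|^{-\gamma}$ (for some $C>0$) to obtain, for each $s>0$, the inclusion of super-level sets $\{\xi:|h(\xi)|\ge s\}\subseteq\{\xi:\|\xi\|\le (C/s)^{1/\gamma}\}=B\bigl(0,(C/s)^{1/\gamma}\bigr)$, so that the level-set measure is controlled by $\mu_{k,a}$ of a Euclidean ball centred at the origin.

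The main step is the scaling identity for $\mu_{k,a}$ on such balls. The density $v_{k,a}$ is homogeneous: $v_k(\xi)=\prod_{\alpha\in\mathcal{R}}|\langle\alpha,\xi\rangle|^{k_\alpha}$ is homogeneous of degree $\sum_{\alpha\in\mathcal{R}}k_\alpha=2\langle k\rangle$ and the radial weight contributes the degree $a-2$, so $v_{k,a}$ is homogeneous of degree $2\langle k\rangle+a-2$. Changing variables $\xi\mapsto R\xi$ in $d\mu_{k,a}(\xi)=v_{k,a}(\xi)\,d\xi$ therefore yields $\int_{B(0,R)}d\mu_{k,a}=c_{k,a,N}\,R^{Q}$ with $c_{k,a,N}=\mu_{k,a}(B(0,1))$; this constant is finite precisely because the running assumption $a+2\langle k\rangle+N-2>0$, i.e.\ $Q>0$, is exactly the condition that $v_{k,a}$ be locally integrable at the origin. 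Consequently $\int_{\{|h|\ge s\}}d\mu_{k,a}\le c_{k,a,N}(C/s)^{Q/\gamma}$ for all $s>0$.

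Combining the two steps, for every $s>0$ we get $s\bigl[\int_{\{|h|\ge s\}}d\mu_{k,a}\bigr]^{\frac1p-\frac1q}\le c_{k,a,N}^{\frac1p-\frac1q}\,C^{\frac Q\gamma(\frac1p-\frac1q)}\,s^{\,1-\frac Q\gamma(\frac1p-\frac1q)}$, and since $\frac1p-\frac1q=\frac\gamma Q$ the exponent $1-\frac Q\gamma(\frac1p-\frac1q)$ vanishes, so the right-hand side equals the $s$-independent constant $c_{k,a,N}^{\gamma/Q}C$. Hence the supremum above is finite and Theorem~\ref{Jacobimult} gives that $T_h$ extends to a bounded operator from $L^p_{k,a}(\mathbb{R}^N)$ to $L^q_{k,a}(\mathbb{R}^N)$. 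I do not anticipate a genuine obstacle here: the only points needing care are the homogeneity/scaling of $\mu_{k,a}$ on balls and the finiteness of $\mu_{k,a}(B(0,1))$, both immediate from the explicit form of $v_{k,a}$ together with the standing positivity assumption, while the remainder is the exponent bookkeeping that makes the $s$-dependence cancel exactly.
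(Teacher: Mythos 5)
Your argument is correct and follows essentially the same route as the paper: reduce to Theorem \ref{Jacobimult}, include the super-level set of $h$ in a ball of radius $\sim s^{-1/\gamma}$, and use that $\mu_{k,a}(B(0,R))\sim R^{2\langle k\rangle+N+a-2}$, which is exactly what the paper computes via the polar-coordinate formula $d\mu_{k,a}=r^{2\langle k\rangle+N+a-3}v_k(\theta)\,dr\,d\sigma(\theta)$; your homogeneity/scaling derivation of this volume growth is just a repackaging of that same computation, and the exponent cancellation is identical.
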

\begin{proof}
It follows from Theorem \ref{Jacobimult}  that 
\begin{align*}
    \|A\|_{L^p_{k, a}(\mathbb{R}^N) \rightarrow L^q_{k, a}(\mathbb{R}^N)} &\lesssim \sup_{s>0} s \left[  \int_{\{ \xi \in \mathbb{R}^N: |h(\xi)|\geq s\}} d\mu_{k, a}(\xi) \right]^{\frac{1}{p}-\frac{1}{q}} \\&\lesssim \sup_{s>0} s \left[ \int_{\{ \xi \in \mathbb{R}^N:\,\, s \lesssim \|\xi \|^{-\gamma} \}} d\mu_{k, a}(\xi) \right]^{\frac{1}{p}-\frac{1}{q}}.
\end{align*}
Now, using the polar coordinates on $\mathbb{R}^N$ and the fact that in polar coordinates it holds that $d\mu_{k,a}(x) (=v_{k,a}(x) \,dx):= r^{2\langle k \rangle+N+a-3} v_k(\theta)\, dr \,d\sigma(\theta)$ (see \cite{John}), we get
\begin{align*}
    \|A\|_{L^p_{k, a}(\mathbb{R}^N) \rightarrow L^q_{k, a}(\mathbb{R}^N)} &\lesssim \sup_{s>0} s \left[  \int_{\{ r \in \mathbb{R}_+:\,\, r \lesssim s^{-\frac{1}{\gamma}}  \}} r^{2 \langle k \rangle+N+a-3} dr \right]^{\frac{1}{p}-\frac{1}{q}} \\ & \lesssim \sup_{s>0} s \left[ s^{-\frac{2 \langle k \rangle+N+a-2}{\gamma}} \right]^{\left( \frac{1}{p}-\frac{1}{q} \right)} = \sup_{s>0} 1<\infty,
\end{align*} by using the assumption  \eqref{assu}.
\end{proof}

 \section{Applications to nonlinear PDEs}
 
 This section is devoted to the applications of our main result on $L^p$-$L^q$ boundedness of $(k, a)$-Fourier multipliers  to the well-posedness of abstract Cauchy problem on $\mathbb{R}^N$. The method we use here is the same as in \cite{CKNR} for the case of the  Fourier analysis associated to the biorthogonal eigenfunction expansion of a model operator on smooth manifolds having discrete spectrum.
 
 \subsection{Nonlinear Heat equation}
 Let us consider the following Cauchy problem of nonlinear evolution equation in the space $L^\infty(0, T, L^2_{k,a}(\mathbb{R}^N)),$
 \begin{equation} \label{heat}
     u_t-|Bu(t)|^p=0,\quad u(0)=u_0,
 \end{equation}
 where $B$ is a linear operator on $L^2_{k, a}(\mathbb{R}^N)$ and $1<p<\infty.$
 
 We say that the heat equation \eqref{heat} admits a solution $u$ if 
 \begin{equation}\label{heatsol}
     u(t)=u_0+\int_0^t |Bu(\tau)|^p\, d\tau
 \end{equation} in the space $L^\infty(0, T, L^p_{k, a}(\mathbb{R}^N))$ for every $T<\infty.$
 We say that $u$ is a local solution of \eqref{heat} if it satisfies the equation \eqref{heatsol} in the space $L^\infty(0, T^*, L^2_{k,a }(\mathbb{R}^N))$ for some $T^*>0.$
 \begin{thm}
 Let $1<p<\infty.$ Suppose that $B$ is Fourier multiplier such that its symbol $h$ satisfies $$\sup_{s>0} s \left[ \int_{\{ \xi \in \mathbb{R}^N: |h(\xi)|\geq s\}} d\mu_{k, a}(\xi) \right]^{\frac{1}{2}-\frac{1}{2p}}<\infty.$$ Then the Cauchy problem \eqref{heat} has a local solution in the space $L^\infty(0, T^*, L^2_{k,a}(\mathbb{R}^N))$ for some $T^*>0.$ 
 \end{thm}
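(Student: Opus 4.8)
The plan is to run a standard Banach fixed point argument for the integral equation \eqref{heatsol}. First I would fix $T^*>0$ (to be chosen small) and set up the complete metric space
$$
X_{T^*,R}:=\Big\{ u\in L^\infty(0,T^*,L^2_{k,a}(\mathbb{R}^N)) : \|u\|_{L^\infty(0,T^*,L^2_{k,a})}\le R \Big\},
$$
equipped with the distance induced by the $L^\infty(0,T^*,L^2_{k,a})$ norm, with $R$ chosen of order $2\|u_0\|_{L^2_{k,a}}$. On this space I define the nonlinear map
$$
(\Phi u)(t):=u_0+\int_0^t |Bu(\tau)|^p\,d\tau,
$$
and the goal is to show that, for $T^*$ small enough depending on $R$ and on the multiplier bound, $\Phi$ maps $X_{T^*,R}$ into itself and is a contraction there.

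The key analytic input is Theorem \ref{Jacobimult}: under the stated hypothesis on the symbol $h$, the operator $B$ is bounded from $L^{2p}_{k,a}(\mathbb{R}^N)$ to $L^2_{k,a}(\mathbb{R}^N)$, since taking $p_0=2p$ (note $1<2p$ is not quite $\le 2$ — one uses instead that $B:L^{p}\to L^q$ with $\tfrac1p-\tfrac1q=\tfrac12-\tfrac1{2p}$; choosing the source exponent $2p$ and target exponent $2$ gives exactly the exponent appearing in the hypothesis). Hence, writing $\||Bu(\tau)|^p\|_{L^2_{k,a}}=\|Bu(\tau)\|_{L^{2p}_{k,a}}^{p}$ is the wrong pairing; the correct estimate is
$$
\big\| |Bu(\tau)|^{p}\big\|_{L^2_{k,a}(\mathbb{R}^N)}=\|Bu(\tau)\|_{L^{2p}_{k,a}(\mathbb{R}^N)}^{p}\lesssim \|u(\tau)\|_{L^{2}_{k,a}(\mathbb{R}^N)}^{p},
$$
using the boundedness $B:L^2_{k,a}\to L^{2p}_{k,a}$, which is precisely the content of Theorem \ref{Jacobimult} with $(p,q)=(2,2p)$ and the displayed hypothesis being $\sup_{s>0}s[\mu_{k,a}\{|h|\ge s\}]^{\frac12-\frac1{2p}}<\infty$. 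Then by Minkowski's integral inequality,
$$
\|\Phi u(t)\|_{L^2_{k,a}}\le \|u_0\|_{L^2_{k,a}}+\int_0^{t}\big\||Bu(\tau)|^{p}\big\|_{L^2_{k,a}}\,d\tau\le \|u_0\|_{L^2_{k,a}}+C\,T^*R^{p},
$$
so choosing $R=2\|u_0\|_{L^2_{k,a}}$ and then $T^*\le R/(2CR^{p})$ guarantees $\Phi(X_{T^*,R})\subset X_{T^*,R}$.

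For the contraction estimate I would use the elementary inequality $\big||z|^p-|w|^p\big|\le p\,(|z|+|w|)^{p-1}|z-w|$ together with Hölder's inequality (exponents $\tfrac{2p}{2p-2}$ and $\tfrac{2p}{2}$) to obtain, for $u,v\in X_{T^*,R}$,
$$
\big\| |Bu(\tau)|^{p}-|Bv(\tau)|^{p}\big\|_{L^2_{k,a}}
\lesssim \big(\|Bu(\tau)\|_{L^{2p}_{k,a}}+\|Bv(\tau)\|_{L^{2p}_{k,a}}\big)^{p-1}\,\|Bu(\tau)-Bv(\tau)\|_{L^{2p}_{k,a}}
\lesssim R^{p-1}\,\|u(\tau)-v(\tau)\|_{L^{2}_{k,a}},
$$
again invoking the boundedness of $B:L^2_{k,a}\to L^{2p}_{k,a}$ from Theorem \ref{Jacobimult} (and linearity of $B$). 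Integrating in $\tau\in(0,t)$ and taking the supremum over $t\in(0,T^*)$ gives
$$
\|\Phi u-\Phi v\|_{L^\infty(0,T^*,L^2_{k,a})}\le C\,T^*R^{p-1}\,\|u-v\|_{L^\infty(0,T^*,L^2_{k,a})},
$$
so after possibly shrinking $T^*$ so that $CT^*R^{p-1}<1$, $\Phi$ is a contraction. The Banach fixed point theorem then yields a unique $u\in X_{T^*,R}$ solving \eqref{heatsol}, which is the asserted local solution.

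The part that needs the most care — and which I expect to be the main obstacle — is not the fixed point machinery itself but verifying that the hypothesis on $h$ really delivers the mapping property $B:L^2_{k,a}\to L^{2p}_{k,a}$ in the precise form needed: one must match the exponents so that $\tfrac1p-\tfrac1q=\tfrac12-\tfrac1{2p}$ holds with $q=2p\ge 2$ and source exponent $2\le 2$, and check that the integrability exponent $\tfrac12-\tfrac1{2p}$ appearing in the theorem's statement coincides with the one in Theorem \ref{Jacobimult}. Once that identification is made, everything else is the routine contraction-mapping estimate sketched above, with the only nonlinear technicality being the pointwise bound $\big||z|^p-|w|^p\big|\le p(|z|+|w|)^{p-1}|z-w|$ and an application of Hölder to pass from $L^{2p}$ control of $Bu-Bv$ to the required $L^2$ control of the difference of nonlinearities.
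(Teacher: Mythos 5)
Your argument is correct and follows essentially the same route as the paper: both reduce the problem to the boundedness $B\colon L^{2}_{k,a}(\mathbb{R}^N)\to L^{2p}_{k,a}(\mathbb{R}^N)$ obtained from Theorem \ref{Jacobimult} with source exponent $2$ and target exponent $2p$ (which is exactly why the hypothesis carries the exponent $\tfrac12-\tfrac{1}{2p}$), and then run a fixed-point argument on a ball in $L^{\infty}(0,T^{*};L^{2}_{k,a}(\mathbb{R}^N))$ whose radius is controlled by $\|u_0\|_{L^2_{k,a}}$, with $T^{*}$ chosen small. If anything, your write-up is more complete than the paper's, since you explicitly verify the contraction property of the map $\Phi$ via the pointwise bound $\bigl||z|^p-|w|^p\bigr|\le p(|z|+|w|)^{p-1}|z-w|$ and H\"older, a step the paper leaves implicit when it invokes ``the fixed point theorem.''
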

 \begin{proof}
 By integrating equation \eqref{heat} w.r.t. $t$ one get
$$
u(t)=u_{0} + \int\limits_0^t |Bu(\tau)|^{p} d\tau.
$$
By taking the $L^2$-norm on both sides, one obtains
\begin{align*}
    \|u(t)\|_{L^{2}_{k,a}(\mathbb{R}^N)}^{2}  &\leq C \Bigg(\|u_0\|_{L^2_{k,a}(\mathbb{R}^N)}^2+\left\| \int_0^t |Bu(t)|^p\, d\tau \right\|^2_{L^2_{k,a}(\mathbb{R}^N)} \Bigg)\\& = C \Bigg(\|u_0\|_{L^2_{k,a}(\mathbb{R}^N)}^2+ \int_{\mathbb{R^N}} \left| \int_0^t |Bu(t)|^p\, d\tau  \right|^2 d\mu_{k,a}(x)\Bigg).
\end{align*}
Using the inequality $\int_0^t|Bu(\tau)|^p\, d\tau \leq (\int_0^t 1 \, d\tau)^{\frac{1}{2}} (\int_0^t|Bu(\tau)|^{2p}\, d\tau)^{\frac{1}{2}}= t^{\frac{1}{2}} (\int_0^t|Bu(\tau)|^{2p}\, d\tau)^{\frac{1}{2}},$ we get
\begin{align*}
    \|u(t)\|_{L^{2}_{k,a}(\mathbb{R}^N)}^{2}  & \leq  C \Bigg(\|u_0\|_{L^2_{k,a}(\mathbb{R}^N)}^2+ t \int_{\mathbb{R^N}}   \int_0^t |Bu(t)|^{2p}\, d\tau\,  d\mu_{k,a}(x)\Bigg)\\& \leq  C \Bigg(\|u_0\|_{L^2_{k,a}(\mathbb{R}^N)}^2+ t    \int_0^t \int_{\mathbb{R^N}} |Bu(t)|^{2p}\,   d\mu_{k,a}(x)\, d\tau\Bigg) \\& \leq  C \Bigg(\|u_0\|_{L^2_{k,a}(\mathbb{R}^N)}^2+ t    \int_0^t \|Bu(t)\|^{2p}_{L^{2p}_{k,a}(\mathbb{R}^N)}\, d\tau\Bigg).
\end{align*}

Next, using the condition on the symbol $h$ it can be seen, as an application of Theorem \ref{Jacobimult}, that the operator $B$ is a bounded operator from $L^2_{k,a }(\mathbb{R}^N)$ to $L^{2p}_{k,a}(\mathbb{R}^N)$, that is, $\|B u(t)\|_{L^{2p}_{k,a}(\mathbb{R}^N)} \leq C_1 \|u(t)\|_{L^{2}_{k,a}(\mathbb{R}^N)}$ and, therefore, the above inequality yields
\begin{align}\label{EQ:space-norm}
    \|u(t)\|_{L^{2}_{k,a}(\mathbb{R}^N)}^{2} \leq C \Bigg(\|u_0\|_{L^2_{k,a}(\mathbb{R}^N)}^2+ t    \int_0^t  \|u(t)\|^{2p}_{L^{2}_{k,a}(\mathbb{R}^N)}\, d\tau\Bigg),
\end{align}
for some constant $C$ independent from $u_0$ and $t$.

Finally, by taking $L^{\infty}$-norm in time on both sides of the estimate \eqref{EQ:space-norm}, one obtains
\begin{equation}\label{EQ:time-space-norm}
\|u(t)\|_{L^{\infty}(0, T; L^{2}_{k,a}(\mathbb{R}^N))}^{2}\leq C\Big(\|u_{0}\|_{L^{2}_{k,a}(\mathbb{R^N)}}^{2} + T^{2} \|u\|^{2p}_{L^{\infty}(0, T; L^{2}_{k, a}(\mathbb{R}^N))}\Big).
\end{equation}

Let us introduce the following set
\begin{equation}\label{u-Set}
S_c:=\left\{u\in L^{\infty}(0, T; L^{2}_{k, a}(\mathbb{R}^N)): \|u\|_{L^{\infty}(0, T; L^{2}_{k,a}(\mathbb{R^N))}} \leq c \|u_{0}\|_{L^{2}_{k,a}(\mathbb{R}^N)}\right\},
\end{equation}
for some constant $c \geq 1$. Then, for $u \in S_c$ we have
$$
\|u_{0}\|_{L^{2}_{k,a}(\mathbb{R}^N)}^{2} + T^{2} \|u\|^{2p}_{L^{\infty}(0, T; L^{2}_{k, a}(\mathbb{R}^N))} \leq 
\|u_{0}\|_{L^{2}_{k,a}(\mathbb{R^N)}}^{2} + T^{2} c^{2p} \|u_0\|^{2p}_{L^{2}_{k,a}(\mathbb{R}^N)}.
$$
Finally, for $u$ to be from the set $S_c$ it is enough to have, by invoking \eqref{EQ:time-space-norm}, that  
$$
\|u_{0}\|_{L^{2}_{k,a}(\mathbb{R}^N)}^{2} + T^{2} c^{2p} \|u_0\|^{2p}_{L^{2}_{k, a}(\mathbb{R}^N)}\leq c^{2} \|u_0\|_{L^{2}_{k,a}(\mathcal{R}^N)}^{2}.
$$
It can be obtained by requiring the following,
$$
T \leq T^{\ast}:=\frac{\sqrt{c^{2}-1}}{c^{p}\|u_0\|_{L^{2}_{k,a}(\mathbb{R}^N)}}.
$$
Thus, by applying the fixed point theorem, there exists a unique local solution $u\in L^{\infty}(0, T^{\ast}; L^{2}_{k,a }(\mathbb{R}^N))$ of the Cauchy problem \eqref{heat}.
 \end{proof}
 
 \subsection{Nonlinear Wave Equation} In this subsection, we will consider that the initial value problem  for the nonlinear wave equation  
\begin{align}\label{E-WNLE}
u_{tt}(t) - b(t)|Bu(t)|^{p} = 0,
\end{align} with the initial condition 
$$
u(0)=u_0, \,\,\, u_t(0)=u_1,
$$
where $b$ is a positive bounded function depending only on time, $B$ is a linear operator in $L^2_{k,a}(\mathbb{R}^N)$ and $1< p<\infty$. We intend to study the well-posedness of the wave equation \eqref{E-WNLE}.

We say that the initial value problem \eqref{E-WNLE} admits a global solution $u$ if it satisfies
\begin{equation}\label{E-WNLE-Sol}
u(t)=u_{0} + t u_{1} + \int\limits_0^t (t-\tau) b(\tau) |Bu(\tau)|^{p} d\tau
\end{equation}
in the space $L^{\infty}(0, T; L^{2}_{k,a}(\mathbb{R}^N))$ for every $T<\infty$.

We say that \eqref{E-WNLE} admits a local solution $u$ if it satisfies
the equation \eqref{E-WNLE-Sol} in the space $L^{\infty}(0, T^{\ast}; L^{2}_{k, a}(\mathbb{R}^N))$ for some $T^{\ast}>0$.

\begin{thm}\label{Th: E-WNLE}
Let $1< p<\infty$. Suppose that $B$ is a Fourier multiplier such that its symbol $h$ satisfies $$\sup_{s>0} s \left[ \int_{\{ \xi \in \mathbb{R}^N: |h(\xi)|\geq s\}} d\mu_{k, a}(\xi) \right]^{\frac{1}{2}-\frac{1}{2p}}<\infty.$$

\begin{itemize}
    \item [(i)] If $\|b\|_{L^{2}(0, T)}<\infty$ for some $T>0$ then the Cauchy problem \eqref{E-WNLE} has a local solution in  $L^{\infty}(0, T; L^{2}_{k,a}(\mathbb{R}^N))$.
    \item [(ii)] Suppose that $u_1$ is identically equal to zero. Let $\gamma>3/2$. Moreover, assume that $\|b\|_{L^{2}(0, T)}\leq c \, T^{-\gamma}$ for every $T>0$, where $c$ does not depend on $T$. Then, for every $T>0$, the Cauchy problem \eqref{E-WNLE} has a global solution in the space $L^{\infty}(0, T; L^{2}_{k,a}(\mathbb{R}^N))$ for sufficiently small $u_0$ in $L^2$-norm.
\end{itemize}
\end{thm}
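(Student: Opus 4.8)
The plan is to recast both parts as a fixed-point problem for the Duhamel map
$$\Phi(u)(t) := u_{0} + t\,u_{1} + \int_{0}^{t} (t-\tau)\, b(\tau)\, |Bu(\tau)|^{p}\, d\tau$$
on a closed ball of $L^{\infty}(0, T; L^{2}_{k,a}(\mathbb{R}^N))$, exactly in the spirit of the nonlinear heat equation treated above and of \cite{CKNR}. The only ingredient imported from the previous sections is that, under the stated condition on the symbol $h$, Theorem \ref{Jacobimult} applied with the exponent pair $(2,2p)$ produces a bounded extension $B : L^{2}_{k,a}(\mathbb{R}^N) \to L^{2p}_{k,a}(\mathbb{R}^N)$, of norm $C_{1}$ say; equivalently $\big\|\,|Bv|^{p}\,\big\|_{L^{2}_{k,a}(\mathbb{R}^N)} = \|Bv\|^{p}_{L^{2p}_{k,a}(\mathbb{R}^N)} \le C_{1}^{p}\, \|v\|^{p}_{L^{2}_{k,a}(\mathbb{R}^N)}$ for every $v \in L^{2}_{k,a}(\mathbb{R}^N)$.

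First I would prove the basic a priori estimate, following the heat-equation computation. Starting from \eqref{E-WNLE-Sol}, taking the $L^{2}_{k,a}$-norm and using the triangle inequality, applying Minkowski's integral inequality to pass the norm inside the $\tau$-integral, using $0 \le t-\tau \le T$ on $[0,T]$ together with the mapping property of $B$ and $\|u(\tau)\|_{L^{2}_{k,a}} \le \|u\|_{L^{\infty}(0,T;L^{2}_{k,a})}$, and finally Cauchy--Schwarz in $\tau$ (which replaces $\int_{0}^{t} b(\tau)\,d\tau$ by $\sqrt{T}\,\|b\|_{L^{2}(0,T)}$), one arrives at
$$\|u\|_{L^{\infty}(0,T;L^{2}_{k,a})} \le \|u_{0}\|_{L^{2}_{k,a}} + T\,\|u_{1}\|_{L^{2}_{k,a}} + C_{1}^{p}\, T^{3/2}\, \|b\|_{L^{2}(0,T)}\, \|u\|^{p}_{L^{\infty}(0,T;L^{2}_{k,a})}.$$
Running the same chain of inequalities on $\Phi(u)-\Phi(v)$, and using the elementary pointwise bound $\big|\,|z|^{p}-|w|^{p}\,\big| \le p\big(|z|^{p-1}+|w|^{p-1}\big)|z-w|$ together with Hölder's inequality with the conjugate exponents $\tfrac{2p}{p-1}$ and $2p$ (and once more the boundedness of $B$), one obtains, for $u,v$ in the ball $S_{R}$ of radius $R$ in $L^{\infty}(0,T;L^{2}_{k,a}(\mathbb{R}^N))$,
$$\|\Phi(u)-\Phi(v)\|_{L^{\infty}(0,T;L^{2}_{k,a})} \le 2p\, C_{1}^{p}\, R^{p-1}\, T^{3/2}\, \|b\|_{L^{2}(0,T)}\, \|u-v\|_{L^{\infty}(0,T;L^{2}_{k,a})}.$$

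For part (i), $\|b\|_{L^{2}(0,T)}$ is finite, so both $T\,\|u_{1}\|_{L^{2}_{k,a}}$ and $T^{3/2}\|b\|_{L^{2}(0,T)}$ tend to $0$ as $T\to 0^{+}$; fixing $R := 2\|u_{0}\|_{L^{2}_{k,a}}+1$ and then $T^{\ast}>0$ small enough, the two displayed estimates show that $\Phi$ maps $S_{R}\subset L^{\infty}(0,T^{\ast};L^{2}_{k,a}(\mathbb{R}^N))$ into itself and is a contraction on it, so the Banach fixed point theorem gives a unique local solution of \eqref{E-WNLE} on $[0,T^{\ast}]$. For part (ii) we set $u_{1}\equiv 0$ and insert $\|b\|_{L^{2}(0,T)}\le c\,T^{-\gamma}$, so the a priori estimate becomes
$$\|u\|_{L^{\infty}(0,T;L^{2}_{k,a})} \le \|u_{0}\|_{L^{2}_{k,a}} + C_{1}^{p}\, c\, T^{3/2-\gamma}\, \|u\|^{p}_{L^{\infty}(0,T;L^{2}_{k,a})},$$
and $\gamma>3/2$ makes the exponent $3/2-\gamma$ negative, so for every fixed $T>0$ the coefficient $C_{1}^{p}c\,T^{3/2-\gamma}$ is a finite constant. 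Choosing $R$ with $2p\,C_{1}^{p}c\,T^{3/2-\gamma}R^{p-1}\le \tfrac12$ (so $R\sim T^{(\gamma-3/2)/(p-1)}$, which grows with $T$ since $\gamma>3/2$) and then requiring $\|u_{0}\|_{L^{2}_{k,a}}\le R/2$, the self-mapping and contraction bounds hold on $S_{R}$; hence for every $T>0$ the Cauchy problem \eqref{E-WNLE} has a solution on $[0,T]$ as soon as $\|u_{0}\|_{L^{2}_{k,a}}$ is sufficiently small, and by uniqueness of the fixed point these solutions are compatible, giving the asserted global solution.

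The genuinely routine parts are the two norm estimates; the one point that needs care is the bookkeeping of the powers of $T$ and of $\|b\|_{L^{2}(0,T)}$ produced by the double time integral $\int_{0}^{t}(t-\tau)\,b(\tau)(\cdots)\,d\tau$ and by the Cauchy--Schwarz step in $\tau$, together with the observation that in part (ii) the threshold $\gamma>3/2$ is precisely what makes the decay of $\|b\|_{L^{2}(0,T)}$ dominate the $T^{3/2}$ lost in the Duhamel formula, so that the small-data contraction closes. Everything else --- the mapping property of $B$ coming from Theorem \ref{Jacobimult}, the pointwise inequality for $|z|^{p}$, Hölder's inequality, and the Banach fixed point theorem --- is standard.
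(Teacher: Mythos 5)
Your proposal is correct and follows essentially the same route as the paper: the Duhamel formulation, the a priori bound with the factor $T^{3/2}\|b\|_{L^{2}(0,T)}$ (the paper's $T^{3}\|b\|_{L^{2}(0,T)}^{2}$ after squaring), the $L^{2}_{k,a}\to L^{2p}_{k,a}$ boundedness of $B$ from Theorem \ref{Jacobimult}, and a fixed-point argument on an invariant ball, with the threshold $\gamma>3/2$ entering in exactly the same way in part (ii). The only difference is that you explicitly verify the contraction estimate via the pointwise bound on $|z|^{p}-|w|^{p}$ and H\"older with exponents $\tfrac{2p}{p-1}$ and $2p$, a step the paper leaves implicit when it invokes ``the fixed point theorem''; this is a welcome completion rather than a different approach.
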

\begin{proof} (i) By integrating the equation \eqref{E-WNLE} two times  in $t$ one get
$$
u(t)=u_{0} + t u_{1} + \int\limits_0^t (t-\tau) b(\tau) |Bu(\tau)|^{p} d\tau.
$$
By taking the $L^2$-norm on both sides, for $t<T$ one obtains by simple calculation that

\begin{align*}
    \|u(t)\|_{L^{2}_{k,a}(\mathbb{R}^N)}^{2}\leq & C\left\{ \|u_{0}\|_{L^{2}_{k,a}(\mathbb{R}^N)}^{2} + t^2 \|u_{1}\|_{L^{2}_{k,a}(\mathbb{R}^N)}^{2}+ \left\|\int\limits_0^t (t-\tau) b(\tau) |Bu(\tau)|^{p} d\tau \right\|_{L^{2}_{k,a}(\mathbb{R}^N)}^{2} \right\} \\& \leq C\left\{ \|u_{0}\|_{L^{2}_{k,a}(\mathbb{R}^N)}^{2} + t^2 \|u_{1}\|_{L^{2}_{k,a}(\mathbb{R}^N)}^{2}+ \int_{\mathbb{R}^N} \Big|\int\limits_0^t (t-\tau) b(\tau) |Bu(\tau)|^{p} d\tau \Big|^2 d\mu_{k,a}(x) \right\} \\& \leq C\left\{ \|u_{0}\|_{L^{2}_{k,a}(\mathbb{R}^N)}^{2} + t^2 \|u_{1}\|_{L^{2}_{k,a}(\mathbb{R}^N)}^{2}+ \int_{\mathbb{R}^N} \Big(t \int\limits_0^t \Big| b(\tau)|Bu(\tau)|^{p}\Big| d\tau \Big)^{2} d\mu_{k,a}(x) \right\} \\& \leq C\left\{ \|u_{0}\|_{L^{2}_{k,a}(\mathbb{R}^N)}^{2} + t^2 \|u_{1}\|_{L^{2}_{k,a}(\mathbb{R}^N)}^{2}+ \int_{\mathbb{R}^N} t^{2} \int\limits_0^t \Big| b(\tau)\Big|^{2} d\tau \int\limits_0^t \Big| Bu(\tau)\Big|^{2p} d\tau d\mu_{k,a}(x) \right\} \\& \leq C\left\{ \|u_{0}\|_{L^{2}_{k,a}(\mathbb{R}^N)}^{2} + t^2 \|u_{1}\|_{L^{2}_{k,a}(\mathbb{R}^N)}^{2}+ t^2 \|b\|_{L^2(0,T)} \int_{\mathbb{R}^N}  \int\limits_0^t \Big| Bu(\tau)\Big|^{2p} d\tau d\mu_{k,a}(x) \right\} \\& \leq C\left\{ \|u_{0}\|_{L^{2}_{k,a}(\mathbb{R}^N)}^{2} + t^2 \|u_{1}\|_{L^{2}_{k,a}(\mathbb{R}^N)}^{2}+ t^2 \|b\|_{L^2(0,T)} \int\limits_0^t \|Bu(\tau)\|^{2p}_{L^{2p}_{k,a}(\mathbb{R}^N)} d\tau \right\}.
\end{align*}
Next, using the condition on the symbol it can be seen, as an application of Theorem \ref{Jacobimult}, that the operator $B$ is a bounded operator from $L^2_{k,a }(\mathbb{R}^N)$ to $L^{2p}_{k,a}(\mathbb{R}^N)$, that is, $\|B u(t)\|_{L^{2p}_{k,a}(\mathbb{R}^N)} \leq C_1 \|u(t)\|_{L^{2}_{k,a}(\mathbb{R}^N)}$ and, therefore, the above inequality yields
\begin{equation}
\label{EQ: WE-space-norm}
\|u(t)\|_{L^{2}_{k,a}(\mathbb{R}^N)}^{2}\leq C(\|u_{0}\|_{L^{2}_{k,a}(\mathbb{R}^N)}^{2} + t^2 \|u_{1}\|_{L^{2}_{k,a}(\mathbb{R}^N)}^{2}+ t^{2} \|b\|_{L^{2}(0, T)}^{2} \int\limits_0^t \|u(\tau)\|^{2p}_{L^{2p}_{k,a}(\mathbb{R}^N)} d\tau),
\end{equation}
for some constant $C$ not depending on $u_0, u_1$ and $t$. Finally, by taking the $L^{\infty}$-norm in time on both sides of the estimate \eqref{EQ: WE-space-norm}, one obtains
\begin{equation}
\label{EQ: WE-time-space-norm}
\|u\|_{L^{\infty}(0, T; L^{2}_{k,a}(\mathbb{R}^N))}^{2}\leq C (\|u_{0}\|_{L^{2}_{k,a}(\mathbb{R}^N)}^{2} + T^2 \|u_{1}\|_{L^{2}_{k, a}(\mathbb{R}^N)}^{2}+ T^{3} \|b\|_{L^{2}(0, T)}^{2} \|u\|^{2p}_{L^{\infty}(0, T; L^{2}_{k,a}(\mathbb{R}^N))}). 
\end{equation}

Let us introduce the set
\begin{equation}
S_c:=\Big\{u\in L^{\infty}(0, T; L^{2}_{k,a}(\mathbb{R}^N)): \|u\|_{L^{\infty}(0, T; L^{2}_{k,a}(\mathbb{R}^N))}^{2} \leq
c(\|u_{0}\|_{L^{2}_{k,a}(\mathbb{R}^N)}^{2} + T^2 \|u_{1}\|_{L^{2}_{k,a}(\mathbb{R}^N)}^{2})\Big\}
\end{equation}
for some constant $c \geq 1$. Then, for $u \in S_c$ we have
\begin{align}\label{WE-Est}
&\|u_{0}\|_{L^{2}_{k,a }(\mathbb{R}^N)}^{2} + T^2 \|u_{1}\|_{L^{2}_{k,a}(\mathbb{R}^N)}^{2} + T^{3} \|b\|_{L^{2}(0, T)}^{2} \|u\|^{2p}_{L^{\infty}(0, T; L^{2}_{k,a}(\mathbb{R}^N))} \nonumber \\
&\leq \|u_{0}\|_{L^{2}_{k,a}(\mathbb{R}^N)}^{2} + T^2 \|u_{1}\|_{L^{2}_{k,a}(\mathbb{R}^N)}^{2} + T^{3} \|b\|_{L^{2}(0, T)}^{2} c^{p}\Big(\|u_{0}\|_{L^{2}_{k,a}(\mathbb{R}^N)}^{2} + T^2 \|u_{1}\|_{L^{2}_{k,a}(\mathbb{R}^N)}^{2}\Big)^{p}. 
\end{align}

Observe that, to be $u$ from the set $S_c$ it is enough to have, by invoking \eqref{EQ: WE-time-space-norm} and using \eqref{WE-Est}, that  
\begin{align*}
\begin{split}
\|u_{0}\|_{L^{2}_{k,a}(\mathbb{R}^N)}^{2} + &T^2 \|u_{1}\|_{L^{2}_{k,a}(\mathbb{R}^N)}^{2} + T^{3} \|b\|_{L^{2}(0, T)}^{2} c^{p}\Big(\|u_{0}\|_{L^{2}_{k,a}(\mathbb{R}^N)}^{2} + T^2 \|u_{1}\|_{L^{2}_{k,a}(\mathbb{R}^N)}^{2}\Big)^{p}\\
&\leq c(\|u_{0}\|_{L^{2}_{k,a}(\mathbb{R}^N)}^{2} + T^2 \|u_{1}\|_{L^{2}_{k,a}(\mathbb{R}^N)}^{2}).
\end{split}
\end{align*}
It can be obtained by requiring the following
$$
T \leq T^{\ast}:=\min\left[\left(\frac{c-1}{\|b\|_{L^{2}(0, T)}^{2}c^{p}\|u_0\|_{L^{2}_{k,a}(\mathbb{R}^N)}^{2p-2}}\right)^{\frac{1}{3}}, \, \left(\frac{c-1}{\|b\|_{L^{2}(0, T)}^{2}c^{p}\|u_1\|_{L^{2}_{k,a}(\mathbb{R}^N)}^{2p-2}}\right)^{\frac{1}{3}}\right].
$$
Thus, by applying the fixed point theorem, there exists a unique local solution $u\in L^{\infty}(0, T^{\ast}; L^{2}_{k,a}(\mathbb{R}^N))$ of the Cauchy problem \eqref{E-WNLE}.

To prove Part (ii), we  repeat the arguments of the proof of Part (i) to get \eqref{EQ: WE-time-space-norm}. Now, by taking into account assumptions on $u_1$ and $b$ inequality \eqref{EQ: WE-time-space-norm} yields
\begin{equation}
\label{EQ: WE-time-space-norm-2}
\|u\|_{L^{\infty}(0, T; L^{2}_{k,a}(\mathbb{R}^N))}^{2}\leq C \Big(\|u_{0}\|_{L^{2}_{k,a}(\mathbb{R}^N)}^{2} + T^{3-2\gamma}  \|u\|^{2p}_{L^{\infty}(0, T; L^{2}_{k,a}(\mathbb{R}^N))}\Big). 
\end{equation}

For a fixed constant $c \geq 1$, let us introduce the set
$$
S_c:=\Big\{u\in L^{\infty}(0, T; L^{2}_{k,a}(\mathbb{R}^N)): \|u\|_{L^{\infty}(0, T; L^{2}_{k,a}(\mathbb{R}^N))}^{2} \leq c T^{\gamma_{0}}\|u_{0}\|_{L^{2}_{k,a}(\mathbb{R}^N)}^{2}\Big\},
$$
with $\gamma_{0}>0$ is to be defined later. Now, note that for $u \in S_c$ we have
\begin{align*}
\|u_{0}\|_{L^{2}_{k,a}(\mathbb{R}^N)}^{2} + T^{3-2\gamma}  \|u\|^{2p}_{L^{\infty}(0, T; L^{2}_{k,a}(\mathbb{R}^N))} 
\leq \|u_{0}\|_{L^{2}_{k,a}(\mathbb{R}^N)}^{2} + T^{3-2\gamma+\gamma_{0}p} c^{p} \|u_{0}\|_{L^{2}_{k,a}(\mathbb{R}^N)}^{2p}.    
\end{align*}

To guarantee $u\in S_c$, by invoking \eqref{EQ: WE-time-space-norm-2} we require that
\begin{align*}
\|u_{0}\|_{L^{2}_{k,a}(\mathbb{R}^N)}^{2} + T^{3-2\gamma+\gamma_{0}p} c^{p} \|u_{0}\|_{L^{2}_{k,a}(\mathbb{R}^N)}^{2p} \leq c T^{\gamma_{0}} \|u_{0}\|_{L^{2}_{k,a}(\mathbb{R}^N)}^{2}.   
\end{align*}
Now by choosing $0<\gamma_0<\frac{2\gamma-3}{p}$ such that
$
\tilde{\gamma}:=3-2\gamma+\gamma_{0}p<0,
$ we obtain
$$
c^{p} \|u_{0}\|_{L^{2}_{k,a}(\mathbb{R}^N)}^{2p-2} \leq c T^{-\tilde{\gamma}+\gamma_{0}}.
$$
From the last estimate, we conclude that for any $T>0$ there exists sufficiently small $\|u_{0}\|_{L^{2}_{k,a}(\mathbb{R}^N)}$ such that IVP \eqref{E-WNLE} has a solution. It proves Part (ii) of Theorem \ref{Th: E-WNLE}.
\end{proof}

 \section*{Acknowledgment}
 The authors thank the referees for useful comments and suggestions which have greatly improved the exposition. The authors are grateful to Niyaz Tokmagambetov for fruitful discussions. VK also thanks Wentao Teng for helpful suggestions.
	VK and MR are supported by FWO Odysseus 1 grant G.0H94.18N: Analysis and Partial Differential Equations and by the Methusalem programme of the Ghent University Special Research Fund (BOF)
	(Grant number 01M01021). MR is also supported  by the EPSRC Grant EP/R003025/1 and by the FWO grant G022821N.

	\end{document}